\newcommand{\ep}{{ \epsilon  }}
\newcommand{\del}{{ \delta  }}
\newcommand{\bq}{\begin{equation}}
\newcommand{\eq}{\end{equation}}
\newcommand{\pa}{\partial}
\newcommand{\R}{{ \mathbb{R}  }}
\newcommand{\bbr}{{ \mathbb{R}  }}
\newcommand{\calC}{{ \mathcal C  }}
\newcommand{\calK}{{ \mathcal K  }}
\newcommand{\calN}{{ \mathcal N  }}
\newcommand{\bke}[1]{\left( #1 \right)}
\newcommand{\norm}[1]{\left\Vert #1 \right\Vert}
\newcommand{\abs}[1]{\left| #1 \right|}
\newcommand{\om}{{ \omega  }}
\newcommand{\na}{\nabla}
\newcommand {\ga}{\gamma}
\newcommand {\al}{\alpha}
\newcommand{\Del}{\Delta}
\begin{document}
\bibliographystyle{plain}


\newtheorem{defn}{Definition}
\newtheorem{lemma}[defn]{Lemma}
\newtheorem{proposition}{Proposition}
\newtheorem{theorem}[defn]{Theorem}
\newtheorem{cor}{Corollary}
\newtheorem{remark}{Remark}
\numberwithin{equation}{section}

\def\Xint#1{\mathchoice
   {\XXint\displaystyle\textstyle{#1}}%
   {\XXint\textstyle\scriptstyle{#1}}%
   {\XXint\scriptstyle\scriptscriptstyle{#1}}%
   {\XXint\scriptscriptstyle\scriptscriptstyle{#1}}%
   \!\int}
\def\XXint#1#2#3{{\setbox0=\hbox{$#1{#2#3}{\int}$}
     \vcenter{\hbox{$#2#3$}}\kern-.5\wd0}}
\def\ddashint{\Xint=}
\def\dashint{\Xint-}
\def\aint{\Xint\diagup}

\newenvironment{proof}{{\bf Proof.}}{\hfill\fbox{}\par\vspace{.2cm}}
\newenvironment{pfthm1}{{\par\noindent\bf
            Proof of Theorem \ref{Theorem1} }}{\hfill\fbox{}\par\vspace{.2cm}}
\newenvironment{pfprop1}{{\par\noindent\bf
            Proof of Proposition  \ref{time-decay} }}{\hfill\fbox{}\par\vspace{.2cm}}
\newenvironment{pfthm3}{{\par\noindent\bf
            Proof of Theorem  \ref{Theorem2} }}{\hfill\fbox{}\par\vspace{.2cm}}

\newenvironment{pfthm4}{{\par\noindent\bf
Sketch of proof of Theorem \ref{Theorem6}.
}}{\hfill\fbox{}\par\vspace{.2cm}}
\newenvironment{pfthm5}{{\par\noindent\bf
Proof of Theorem 5. }}{\hfill\fbox{}\par\vspace{.2cm}}
\newenvironment{pflemsregular}{{\par\noindent\bf
            Proof of Lemma \ref{sregular}. }}{\hfill\fbox{}\par\vspace{.2cm}}

\title{Asymptotic behaviors of solutions for an aerobatic model coupled to fluid equations}
\author{Myeongju Chae, Kyungkeun Kang and Jihoon Lee}

\date{}

\maketitle
\begin{abstract}
We consider coupled system of Keller-Segel type equations and the
incompressible Navier-Stokes equations in spatial dimension two. We
show temporal decay estimates of solutions with small initial data
and obtain their asymptotic profiles as time tends to infinity.
\newline{\bf 2000 AMS Subject
Classification}: 35Q30, 35Q35, 76Dxx, 76Bxx
\newline {\bf Keywords}: asymptotic behavior, Keller-Segel,
Navier-Stokes equations
\end{abstract}

\section{Introduction}
 \setcounter{equation}{0}
In this paper, we consider a mathematical model  describing the
dynamics of  oxygen, swimming bacteria, and viscous incompressible
fluids in $\bbr^2$.
\begin{equation}\label{KSNS} \left\{
\begin{array}{ll}
\partial_t n + u \cdot \nabla  n - \Delta n= -\nabla\cdot (\chi (c) n \nabla c),\\
\vspace{-3mm}\\
\partial_t c + u \cdot \nabla c-\Delta c =-k(c) n,\\
\vspace{-3mm}\\
\partial_t u + u\cdot \nabla u -\Delta u +\nabla p=-n \nabla
\phi,\quad
\nabla \cdot u=0 
\end{array}
\right. \quad\mbox{ in }\,\, Q_{T} := (0,\, T) \times \R^{2},
\end{equation}
where $c(t,\,x) : Q_{T} \rightarrow \R^{+}$, $n(t,\,x) : Q_{T}
\rightarrow \R^{+}$, $u(t,\, x) : Q_{T} \rightarrow \R^{d}$ and
$p(t,x) :  Q_{T} \rightarrow \R$ denote the oxygen concentration,
cell concentration, fluid velocity, and scalar pressure,
respectively. Here $\R^+$ indicates the set of non-negative real
numbers.
Such a model was proposed by Tuval et al.\cite{TCDWKG}, formulating
the dynamics of swimming bacteria, {\it Bacillus subtilis} (see
\cite{TCDWKG} for more details on biological phenomena).
%

The nonnegative functions $k(c)$ and $\chi (c)$ denote the oxygen
consumption rate and the aerobatic sensitivity, respectively, i.e.
$k, \chi:\R^+\rightarrow\R^+$ such that $k(c)=k(c(x,t))$ and
$\chi(c)=\chi(c(x,t))$. Initial data are given by $(n_0(x), c_0(x),
u_0(x))$ with $n_0(x),\, c_0(x) \geq 0$ and $\nabla \cdot u_0=0$.
To describe the fluid motions, Boussinesq approximation is used to
denote the effect due to heavy bacteria. The time-independent
function $\phi =\phi (x)$ denotes the potential function produced by
different physical mechanisms, e.g., the gravitational force or
centrifugal force.
\\
\indent We can compare the above system \eqref{KSNS} to the
classical Keller-Segel model, suggested by Patlak\cite{Patlak} and
Keller-Segel\cite{KS1, KS2}, which is given as
\begin{equation}\label{KS-nD} \,\,\left\{
 \begin{array}{c}
 n_t=\Delta n-\nabla \cdot(n \chi\nabla c),\\
 \vspace{-3mm}\\
  c_t=\Delta c-\alpha c+\beta n,\\
 \end{array}
 \right.
\end{equation}
where $n=n(t,x)$ is the cell density and $c=c(t,x)$ is the
concentration of chemical attractant substance. Here, $\chi$ is the
chemotatic sensitivity, and $\alpha$ and $\beta$ are the decay and
production rate of the chemical, respectively. The system
\eqref{KS-nD} has been comprehensively studied and we will not try
to give list of results here (see e.g. \cite{Her-Vela, NSY, OY, Win}
and the survey papers \cite{Horstmann1, Horstmann2}). In the absence
of effect of fluids, i.e., $u=0$, the system \eqref{KSNS} has some
similarities to the Keller-Segel equations \eqref{KS-nD} and
however, we emphasize that the oxygen concentration in \eqref{KSNS}
is consumed and the chemical substance, meanwhile, is produced by
$n$ in \eqref{KS-nD}. That's why the righthand side of the second
equation in \eqref{KSNS} or \eqref{KS-nD} has a different sign.


We review some known results related to our concerns. In \cite{Lorz}
existence of solutions was shown locally in time for bounded domains
in $\R^3$ and \cite{DLM} proved that smooth solutions are globally
extended in time if initial data are sufficiently close to constant
steady states and if $\chi(\cdot), k(\cdot)$ satisfy the following
conditions:
\begin{equation}\label{Assumption1}
\chi'(\cdot)\ge 0,\quad k'(\cdot) >0,\quad
\left(\frac{k(\cdot)}{\chi(\cdot)} \right)^{''} <0.
\end{equation}
It was also shown in \cite{DLM} that weak solutions exist globally
in time in $\R^2$, provided that the initial chemical concentration
is small.
In $\R^2$, \cite{Win2} proved the global existence of regular
solutions without smallness assumptions on initial data for bounded
domains with boundary conditions $\partial_{\nu}n=\partial_{\nu}
c=u=0$ under the following sign conditions on $\chi(\cdot)$ and
$k(\cdot)$:
\begin{equation}\label{Assumption2}
\left( \frac{k(\cdot)}{\chi(\cdot)} \right)^{'} >0, \quad
(\chi(\cdot) k(\cdot))' \ge 0, \quad \left(
\frac{k(\cdot)}{\chi(\cdot)} \right) ^{''} \le 0.
\end{equation}
In \cite{ckl} the authors of the paper established global existence
of smooth solutions in $\R^2$ with no smallness of the initial data
and certain conditions, motivated by experimental results in
\cite{CFKLM} and \cite{TCDWKG}, on $\chi(\cdot)$ and $k(\cdot)$
(compare to \eqref{Assumption2}), that is,
\begin{equation} \label{CKL10-march14}
\chi(c),\, k(c),\, \chi'(c),\, k'(c)\geq 0,\,\mbox {and }\, \sup |
\chi (c)- \mu k(c)| < \epsilon\,\,\mbox{ for some }\,\mu>0.
\end{equation}
Construction of weak solutions in $\R^3$ was also established in
\cite{ckl} in case that $\abs{\chi (c)- \mu k(c)}=0$ in
\eqref{CKL10-march14}. The authors also studied the time decay of
regular solution in \cite{ckl-cpde}. More precisely, it was shown
that if $L^{\infty}$-norm of $c_0$ is sufficiently small, then
regular solution exists globally and, furthermore, $n$ and $c$
satisfy the following time decay:
\begin{equation}\label{CK30L-march14}
\| n(t)\|_{L^{\infty}(\R^d)} + \|c(t)\|_{L^{\infty}(\R^d)} \le
C(1+t)^{-\frac d4},\qquad d=2, 3.
\end{equation}
For bounded convex domains with smooth boundary, \cite{Win3} showed
that $(n, c, u)$ converges to $((n)_a, 0, 0)$ in $L^{\infty}$-norm
under the assumption \eqref{Assumption2}, where $(n)_a$ indicates
the mean value of $n_0$. We consult \cite{CKK}, \cite{FLM} and
\cite{Tao-W} with reference therein for the nonlinear diffusion
models of a porous medium type.


Our main objective of this paper is to obtain asymptotic profiles of
temporal decaying solutions of \eqref{KSNS}. To be more precise, if
certain norms of initial data are sufficiently small, we prove
existence of global regular solutions, which show certain degree of
temporal decay, and in additions, asymptotic profiles of $n$ and $u$
can be obtained.

Before we state our main result, since the vorticity equation is
rather convenient than the equation of velocity, we consider from
now on
\begin{equation}\label{CKL20-dec12}
\partial_t n + u \cdot \nabla  n - \Delta n= -\nabla\cdot (\chi (c) n \nabla c),
\end{equation}
\begin{equation}\label{CKL30-dec12}
\partial_t c + u \cdot \nabla c-\Delta c =-k(c) n,
\end{equation}
\begin{equation}\label{CKL40-dec12}
\partial_t \omega + u\cdot \nabla \omega-\Delta \omega=-\nabla^{\perp}(n \nabla
\phi),
\end{equation}
where $u$ is given as a Biot-Savart law, namely
\begin{equation}\label{CKL20-jan9}
u=K*\omega,\qquad K(x)=\nabla^{\perp} \log
|x|=<-\frac{x_2}{\abs{x}^2}, \frac{x_1}{\abs{x}^2}>.
\end{equation}
We denote by $m$ and $\gamma$ the total mass of $n$ and total
circulation of $\omega$, respectively, i.e.
\begin{equation}\label{CKL101-dec12}
\int_{\R^2} n_0(x)dx=m, \qquad \int_{\R^2} \omega_0(x)dx=\gamma
\end{equation}

We are ready to sate our main result, which reads as follows:

\begin{theorem}\label{Theorem1}
Let the initial data $(n_0, c_0, u_0)$ be given in
$H^{m-1}(\bbr^d)\times H^m (\bbr^d)\times H^m(\bbr^d)$ for $m\geq 3$
and $d=2$ with $n_0\geq 0$ and $c_0\geq 0$. Assume that $\chi, k,
\chi', k'$ are all non-negative and
 $\chi$, $k\in C^m(\R^+)$ and $k(0)=0$, $\|
\nabla^l \phi \|_{L^1\cap L^{\infty}}<\infty$ for $1\le |l|\le m$.
There exists a constant $ \epsilon_1>0 $ such that if
\begin{equation}\label{CKL-assumption10}
\norm{n_0}_{L^1(\R^2)}+\norm{c_0}_{L^{\infty}(\R^2)}
+\norm{\omega_0}_{L^1(\R^2)}
<\epsilon_1,
\end{equation}
then unique classical solutions $(n, c, \omega)$ of
\eqref{CKL20-dec12}-\eqref{CKL20-jan9} exist globally and  $(n, c,
\omega)$ satisfy the following asymptotics: for any $R<\infty$ and
for all $1<r<\infty$
\[
\lim_{t\rightarrow\infty}t\norm{n(\cdot,
t)-m\Gamma(\cdot,t)}_{L^{\infty}(B_{t,R})}=0,
\]
\[
\lim_{t\rightarrow\infty}t^{\frac{1}{2}}\norm{\nabla c(\cdot,
t)}_{L^{\infty}(B_{t,R})}=0,
\]
\[
\lim_{t\rightarrow\infty}t^{1-\frac{1}{r}}\norm{\omega(\cdot,
t)-\gamma\Gamma(\cdot,t)}_{L^{r}(B_{t,R})}=0,
\]
where $B_{t,R}:=\{x\in\R^2: \abs{x}<Rt^{\frac{1}{2}}\}$ and
$\Gamma(x,t)$ is  the two dimensional heat kernel, i.e.
$\Gamma(x,t)=(4\pi t)^{-1}\exp(-\abs{x}^2/4t)$.
\end{theorem}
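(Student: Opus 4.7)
The plan is to combine a global-existence result with $L^p$-decay estimates under the smallness hypothesis, then to extract the asymptotic profile via a parabolic rescaling argument that compares $(n,\omega)$ to the 2D heat kernel $\Gamma$.

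\emph{Step 1 (Global existence and decay).} Working with the vorticity formulation \eqref{CKL20-dec12}--\eqref{CKL40-dec12} in mild form and running a contraction/bootstrap argument in the spirit of \cite{ckl-cpde}, using heat-kernel $L^p$--$L^q$ estimates, the Biot--Savart bound $\|u\|_{L^q}\lesssim\|\omega\|_{L^p}$ for $1/p-1/q=1/2$, the smallness \eqref{CKL-assumption10}, and the structural hypotheses $k(0)=0$, $\chi,k\in C^m$, $\nabla^\ell\phi\in L^1\cap L^\infty$, I would derive the a priori bounds
\begin{equation*}
\|n(t)\|_{L^p}+\|\omega(t)\|_{L^p}\lesssim \epsilon_1(1+t)^{-(1-1/p)}\ \ (1\le p\le\infty),\qquad \|c(t)\|_{L^\infty}\lesssim \epsilon_1(1+t)^{-1/2},
\end{equation*}
together with classical $H^m$ regularity by commuting derivatives through the equations. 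The divergence form of the $n$-equation and the curl structure of the forcing in \eqref{CKL40-dec12} give the conservation $\int n(t)=m$, $\int\omega(t)=\gamma$ for all $t\ge 0$.

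\emph{Step 2 (Rescaling, compactness, identification).} For a parameter $\lambda\to\infty$ define
\begin{equation*}
n_\lambda(y,s):=\lambda^2 n(\lambda y,\lambda^2 s),\ \omega_\lambda(y,s):=\lambda^2\omega(\lambda y,\lambda^2 s),\ c_\lambda(y,s):=c(\lambda y,\lambda^2 s),\ u_\lambda(y,s):=\lambda u(\lambda y,\lambda^2 s),
\end{equation*}
on $s\in[1/2,1]$. The system is form-invariant under this scaling, except that $\nabla\phi(x)$ is replaced by $\lambda\nabla\phi(\lambda y)$ in the $\omega_\lambda$-equation, and $u_\lambda=K\ast\omega_\lambda$ (by the $(-1)$-homogeneity of $K$). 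The bounds of Step 1 give uniform $L^\infty_s(L^1\cap L^\infty_y)$ estimates on $(n_\lambda,\omega_\lambda)$ together with uniform smallness of $c_\lambda$ in $L^\infty$; parabolic regularity produces equicontinuity on compact subsets of $\R^2\times[1/2,1]$, hence precompactness in $C_\mathrm{loc}$. For any subsequential limit $(n^\ast,\omega^\ast)$, the potential term vanishes because $\lambda\nabla\phi(\lambda y)\to 0$ in $L^1_\mathrm{loc}$ by change of variables using $\nabla\phi\in L^1$, while the chemotactic forcing vanishes after writing $\chi(c)\nabla c=\nabla H(c)$ with $H(c):=\int_0^c\chi(s)\,ds=O(c)$, integrating by parts, and invoking $c_\lambda\to 0$. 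Hence $(n^\ast,\omega^\ast)$ solves
\begin{equation*}
\partial_s\omega^\ast+u^\ast\cdot\nabla\omega^\ast-\Delta\omega^\ast=0,\qquad \partial_s n^\ast+u^\ast\cdot\nabla n^\ast-\Delta n^\ast=0,\qquad u^\ast=K\ast\omega^\ast,
\end{equation*}
with mass $m$ and circulation $\gamma$. Self-similarity of the limit, forced by the scale-invariance of the rescaling procedure, together with the uniqueness of the Lamb--Oseen vortex as the self-similar solution of the 2D vorticity equation, gives $\omega^\ast=\gamma\Gamma$ (the Gaussian is exact because $K\ast\Gamma$ is tangential to the radial level sets of $\Gamma$, so $u^\ast\cdot\nabla\omega^\ast=0$); the same radial-tangential argument combined with uniqueness of the linear transport-diffusion equation yields $n^\ast=m\Gamma$.

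\emph{Step 3 (Conclusion).} Uniqueness of the limit upgrades subsequential convergence to convergence of the full family. Undoing the rescaling at $s=1$, $t=\lambda^2$, $x=\lambda y$, using $\lambda^2\Gamma(\lambda y,\lambda^2 s)=\Gamma(y,s)$, translates locally uniform convergence $n_\lambda\to m\Gamma$ on $\{|y|\le R\}$ into $t\|n(\cdot,t)-m\Gamma(\cdot,t)\|_{L^\infty(B_{t,R})}\to 0$. Interpolation against the uniform $L^1\cap L^\infty$ bounds gives the corresponding $L^r(B_{t,R})$ statement for $\omega$, and the $\nabla c$ claim follows from the pointwise smallness $c_\lambda\to 0$ combined with parabolic gradient estimates applied to the rescaled $c$-equation.

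\emph{Main obstacle.} The most delicate step is the vanishing of the chemotactic forcing in the rescaled limit: its natural scaling matches the diffusion, so its vanishing cannot be read from dimensional analysis alone. It relies on propagating the pointwise decay $c(t)\to 0$ in $L^\infty$ through the fully coupled system (including the Navier--Stokes feedback via $u=K\ast\omega$), which is precisely where the smallness hypothesis \eqref{CKL-assumption10} is indispensable; once this is secured, the integration by parts $\int\chi(c)n\nabla c\cdot\nabla\psi=-\int H(c)\,\nabla\!\cdot\!(n\nabla\psi)$ absorbs $\nabla c$ into the genuinely decaying factor $H(c)=O(c)$.
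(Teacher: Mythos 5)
Your proposal follows essentially the same route as the paper: weighted-in-time decay estimates obtained from the mild formulation under the smallness hypothesis, then the parabolic rescaling $n_k(x,t)=k^2n(kx,k^2t)$ (etc.), compactness from the scale-invariant decay bounds, identification of the limit with $\gamma\Gamma$ via uniqueness of the Lamb--Oseen vortex and with $m\Gamma$ via the tangentiality of the Biot--Savart velocity of a radial vorticity, and finally undoing the scaling at $t=k^2$. The only cosmetic differences are in how the forcing terms are killed in the limit: the paper disposes of the potential term by a homogeneity argument combined with $\nabla\tilde\phi\in L^2$ and of the chemotaxis term via $\|c_k(t)\|_{L^p}\le k^{-2/p}\|c_0\|_{L^p}\to 0$, rather than your $L^1_{\mathrm{loc}}$ change-of-variables and $H(c)$ integration-by-parts devices (and note the paper only claims $\|c(t)\|_{L^\infty}$ is bounded, not that it decays like $t^{-1/2}$; that stronger claim in your Step 1 is unnecessary for the argument).
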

\begin{remark}
The unique existence of  classical solution was proved previously in
\cite{ckl-cpde} assuming either $\norm{n_0}_{L^1(\R^2)}<\epsilon_1$
or $\|c_0\|_{L^{\infty}} < \ep_1 $. The smallness condition of
\eqref{CKL-assumption10}
 is necessary to obtain the
time decay and asymptotic behaviors.
We also note that Theorem \ref{Theorem1} implies
the following temporal decay of $(n, c, \omega)$ for large $t$:
\[
\| n(t)\|_{L^{\infty}(\R^2)} \sim\frac{m}{t}+\frac{o(1)}{t}, \qquad
\|\nabla c(t)\|_{L^{\infty}(\R^2)} \sim
\frac{o(1)}{t^{\frac{1}{2}}},
\]
\[
\|
\omega(t)\|_{L^{r}(\R^2)}\sim\frac{\gamma}{t^{1-\frac{1}{r}}}
+\frac{o(1)}{t^{1-\frac{1}{r}}},\qquad
1<r<\infty.
\]
\end{remark}

This paper is organized as follows.
Section 2 is devoted to obtaining decay rate of solutions in case
that certain norm of initial data are sufficiently small. In Section
3, we present the proof of Theorem \ref{Theorem1}.



\section{Estimates of temporal decay}

We first introduce the notation and present preparatory results that
are useful to our analysis. We start with the notation. For $1\leq
q\leq \infty$, we denote by $W^{k,q}(\Omega)$ the usual Sobolev
spaces, namely $W^{k,q}(\Omega)=\{f\in L^q(\Omega): D^{\alpha}f\in
L^q(\Omega), 0\leq \abs{\alpha}\leq k\}$. The letter $C$ is used to
represent a generic constant, which may change from line to line,
and $C(*,\cdots,*)$ is considered a positive constant depending on
$*,\cdots,*$. Sometimes, we use $A\lesssim B$, which means the
inequality $A\le CB$, where $C$ is a generic constant. For
convenience we mention the elementary inequalities which are
repeatedly used;
\begin{equation}\label{CKL-Jan14-10}
\int_0^t \frac{1}{(t-s)^{1-a}} \frac{1}{s^{1-b}} ds \le
\frac{C}{t^{1-(a+b)}} \quad (a>0,  b>0)
\end{equation}
\begin{equation}\label{CKL-Jan14-20}
\int_0^{\frac t2} \frac{1}{(t-s)^b} \frac{1}{s^{1-a}} ds  \le
\frac{C}{t^{b-a}}\qquad \quad\int_{\frac t2}^t \frac{1}{(t-s)^{1-a}}
\frac{1}{s^{b}} ds  \le \frac{C}{t^{b-a}} \quad (a>0,  b\ge 0).
\end{equation}
We remind a lemma in \cite[section 2.2.5]{GGS} and the following is
its slight modified version.
\begin{lemma}\label{CKL-GGS-100}
Let $f:\R^2\rightarrow\R$ and $g:\R^2\rightarrow\R^2$ be $\calC^1$
and radial in $\R^2$. Then,
\[
\bke{(K*g) \nabla }f=0 \qquad \mbox{ in }\,\,\R^2,
\]
where $K(x)=<-\frac{x_2}{\abs{x}^2}, \frac{x_1}{\abs{x}^2}>$.
\end{lemma}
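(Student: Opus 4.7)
The plan is to reduce the lemma to two geometric facts about radial symmetry in $\R^2$: first, that $\nabla f$ points radially whenever $f$ is radial, namely $\nabla f(x) = f'(|x|)\,x/|x|$; and second, the more substantive statement that $K*g$ is purely tangential whenever $g$ is radial, i.e., $(K*g)(x)\cdot x = 0$ for every $x\in\R^2$. The first is immediate from the chain rule, and once both are in hand, a radial vector and a tangential vector are orthogonal at each point, so $(K*g)(x)\cdot\nabla f(x)=0$ pointwise, which is the asserted identity.

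For the tangentiality of $K*g$, I would use a rotational symmetry argument. The Biot--Savart kernel satisfies the equivariance $K(R_\theta z)=R_\theta K(z)$ for every rotation $R_\theta \in SO(2)$, while $g$ is invariant under rotations. A change of variable $y\mapsto R_\theta y$ in $(K*g)(x)=\int K(x-y)g(y)\,dy$ then yields $(K*g)(R_\theta x)=R_\theta(K*g)(x)$, so $K*g$ is equivariant as well. It therefore suffices to verify $(K*g)(x)\cdot x = 0$ on a single ray, say $x=(r,0)$ with $r>0$, since the identity on the rest of the plane will then follow by rotating.

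On that ray, I would compute directly in polar coordinates $y=s(\cos\psi,\sin\psi)$. Writing $g(y) = G(|y|)$, the first component of $(K*g)(r,0)$ reduces to
\[
\int_0^\infty \int_0^{2\pi} \frac{s\sin\psi}{r^2 - 2rs\cos\psi + s^2}\,G(s)\,s\,d\psi\,ds,
\]
whose inner integrand is odd under $\psi\mapsto -\psi$ (the denominator is even, the numerator is odd), so the $\psi$-integral vanishes. This forces $(K*g)(r,0)$ to be orthogonal to $(r,0)$, giving tangentiality on the axis and hence everywhere by the equivariance. The only step of real substance is this tangentiality; the chain rule computation for $\nabla f$ and the final pointwise conclusion are routine, modulo checking that the convolution is well defined and the change of variables is justified under the $\mathcal{C}^1$ hypothesis.
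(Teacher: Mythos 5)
Your proof is correct and follows essentially the same route as the paper, which simply defers to the lemma in \cite[section 2.2.5]{GGS}: there, as in your argument, one shows that $\nabla f$ is parallel to $x$ while $K*g$ is tangential (orthogonal to $x$) by rotational equivariance of $K$ together with an odd-symmetry cancellation in the convolution integral, so the pointwise dot product vanishes. The only caveat you already flag yourself --- that the convolution needs some decay of $g$ at infinity to be well defined, which the bare $\calC^1$-and-radial hypothesis does not supply but which holds in the paper's application where $g$ is a Gaussian --- is a looseness present in the statement of the lemma itself rather than a gap in your argument.
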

\begin{proof}
The proof can be similarly proved by the same arguments as the Lemma
in \cite[section 2.2.5]{GGS}, and therefore, we skip its details.
\end{proof}

In this section, we are concerned with optimal temporal decays of
solutions $(n, c, \omega)$ of
\eqref{CKL20-dec12}-\eqref{CKL20-jan9}, and our main goal is to
prove the next proposition. Let us recall the smallness assumption in Theorem \ref{Theorem1}:
\begin{equation}\label{Nov17-CKL30}
\norm{n_0}_{L^1(\R^2)}+\norm{c_0}_{L^{\infty}(\R^2)}
+\norm{\omega_0}_{L^1(\R^2)}<\epsilon_1,
\end{equation}
where $\omega_0=\nabla\times u_0$.
\begin{proposition}\label{time-decay}
Assume the condition of Theorem \ref{Theorem1} holds. The classical solutions $(n, c, \omega)$ of
\eqref{CKL20-dec12}-\eqref{CKL20-jan9} exist globally and $(n, c,
\omega)$ satisfy the following time decay:
\begin{equation}\label{CKL10-june14}
\| n(t)\|_{L^{\infty}(\R^2)} \le \frac{C\epsilon_1}{t},\qquad \|
\nabla n(t)\|_{L^{\infty}(\R^2)} \le
\frac{C\epsilon_1}{t^{\frac{3}{2}}},
\end{equation}
\begin{equation}\label{CKL10-jan9}
\|\nabla c(t)\|_{L^{\infty}(\R^2)} \le
\frac{C\epsilon_1}{t^{\frac{1}{2}}},\qquad \|\nabla^2
c(t)\|_{L^{\infty}(\R^2)} \le \frac{C\epsilon_1}{t},
\end{equation}
\begin{equation}\label{CKL10-dec12}
\| \omega(t)\|_{L^{r}(\R^2)} \le \frac{C\epsilon_1}{t^{1-\frac{1}{r}}} \quad 1<r<\infty, \qquad
\| \na \omega(t)\|_{L^r(\R^2)}\le \frac{C\epsilon_1}{t^{\frac 32 -\frac{1}{r}}}\quad1\le r< 2.
\end{equation}
\end{proposition}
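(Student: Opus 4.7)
The plan is to run a bootstrap argument on the mild (Duhamel) formulation of \eqref{CKL20-dec12}--\eqref{CKL20-jan9}. Global existence of the classical solution is already established in \cite{ckl-cpde} under weaker hypotheses, so the new content is to upgrade the known decay to the sharp heat-kernel rates by exploiting the \emph{simultaneous} smallness of $\norm{n_0}_{L^1}$, $\norm{c_0}_{L^\infty}$, and $\norm{\om_0}_{L^1}$. Two cheap a priori bounds come first: the maximum principle applied to \eqref{CKL30-dec12} (using $k,n\geq 0$) gives $0\leq c(t)\leq\norm{c_0}_{L^\infty}\leq\ep_1$, and integrating \eqref{CKL20-dec12} in $x$ with $\na\cdot u=0$ and the divergence-form right side yields mass conservation $\norm{n(t)}_{L^1}=\norm{n_0}_{L^1}\leq\ep_1$.

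Next I would introduce a bootstrap functional collecting the target rates,
\begin{align*}
\calN(T)=\sup_{0<t<T}\Bigl[\,&t\norm{n(t)}_{L^\infty}+t^{3/2}\norm{\na n(t)}_{L^\infty}+t^{1/2}\norm{\na c(t)}_{L^\infty}+t\norm{\na^2 c(t)}_{L^\infty}\\
&+t^{1-1/r_0}\norm{\om(t)}_{L^{r_0}}+t^{3/2-1/r_0}\norm{\na\om(t)}_{L^{r_0}}\,\Bigr]
\end{align*}
for a convenient fixed $r_0\in(1,2)$; the full range $1<r<\infty$ in \eqref{CKL10-dec12} is then filled in at the end by running the same argument at a second anchor $r_1>2$ and interpolating. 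For each equation I rewrite the convective term as $u\cdot\na X=\na\cdot(uX)$ and apply Duhamel, e.g.
\begin{align*}
n(t)&=e^{t\Del}n_0-\int_0^t\na\cdot e^{(t-s)\Del}\bkt{un+\chi(c)n\na c}(s)\,ds,\\
\om(t)&=e^{t\Del}\om_0-\int_0^t e^{(t-s)\Del}\bkt{\na\cdot(u\om)+\na^\perp(n\na\phi)}(s)\,ds,
\end{align*}
and analogously for $c$. The initial-data pieces are controlled by the standard heat-semigroup bound $\norm{\na^k e^{t\Del}f}_{L^p}\lesssim t^{-k/2-(1/q-1/p)}\norm{f}_{L^q}$, which directly delivers the advertised $\ep_1/t^\alpha$ rates from $\ep_1$-sized $L^1$ or $L^\infty$ data. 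The nonlinear integrals are estimated by H\"older using the bootstrap norms, with $u$ recovered from $\om$ through the Biot--Savart / Hardy--Littlewood--Sobolev bound $\norm{u}_{L^q}\lesssim\norm{\om}_{L^p}$ ($1/q=1/p-1/2$, $1<p<2$) and the external forcing $n\na\phi$ controlled using $\na\phi\in L^1\cap L^\infty$. Collapsing the resulting $s$-integrals via \eqref{CKL-Jan14-10}--\eqref{CKL-Jan14-20} produces a factor of $\calN(T)^2$ (or $\ep_1\calN(T)$ for the $\phi$-driven piece) from each nonlinear term, giving a relation $\calN(T)\leq C\ep_1+C\calN(T)^2$; a standard continuity argument then closes this as $\calN(T)\leq 2C\ep_1$ for all $T>0$ once $\ep_1$ is small.

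The main technical obstacle is the simultaneous calibration of Sobolev exponents across the three coupled equations. Biot--Savart forces $u\in L^q$ with $q>2$ whenever $\om\in L^r$ with $r>1$, so the bootstrap exponents must be picked so that every product $un$, $u\om$, $\chi(c)n\na c$, and the gravity-driven forcing $\na^\perp(n\na\phi)$ carries exactly the time decay needed for absorption into $\calN(T)^2$. The last term is especially delicate because $\na\phi$ carries neither smallness nor time decay: it has to be dealt with by the $\int_0^{t/2}+\int_{t/2}^t$ splitting in \eqref{CKL-Jan14-20} together with a judicious choice of the dual H\"older exponent $p$ (playing $\na\phi\in L^1$ on one side against $\na\phi\in L^\infty$ on the other) so that the $L^p$-decay of $n(s)$ already supplied by the bootstrap suffices. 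A further subtle point is that $\na c$ drives the chemotactic flux in the $n$-equation while itself being forced by $k(c)n$ in \eqref{CKL30-dec12}, so the $n$- and $\na c$-estimates must be closed simultaneously rather than sequentially.
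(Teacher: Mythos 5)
Your overall strategy --- mild formulation, heat-kernel $L^q$--$L^p$ estimates, Biot--Savart, time-weighted sup norms, and closure by smallness --- is the same as the paper's, and I believe it can be made to work; but the organization is genuinely different and the difference is worth spelling out. The paper proceeds in three stages: Lemma \ref{lemma-ncw-100} first establishes that the \emph{subcritical} weighted norms $\norm{n}_{\calK_p}$, $\norm{c}_{\calN_q}$, $\norm{\om}_{\calK_r}$ (with $4/3<p<2$, $2<q<4$, $1<r<2$) are bounded by $C\ep_1$ via a self-contained quadratic inequality; only then does Lemma \ref{lemma-higher-est} attack the critical $L^\infty$/$L^r$ rates, and Lemma \ref{CKL-Dec05-100} the derivatives. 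The payoff of this staging is that in the critical estimates almost every nonlinear term becomes \emph{linear} in the new unknown norms with a small coefficient $\ep_1$ supplied by the first stage (see \eqref{Dec03-CKL150}), so one never has to worry about whether a product of two critical norms produces an integrable singularity in $s$. Your single monolithic bootstrap $\calN(T)\le C\ep_1+C\calN(T)^2$ compresses this into one step by substituting interpolation between the conserved quantities ($\norm{n}_{L^1}$, $\norm{c}_{L^\infty}$) and the bootstrap norms for the paper's intermediate lemma. That is legitimate for $n$ and $c$, and your continuity argument does start correctly since $\calN(0^+)=0$ for classical data.

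Two caveats you should address explicitly. First, your assertion that ``each nonlinear term produces a factor of $\calN(T)^2$'' is not literally achievable: in the far-field part $\int_0^{t/2}$ the kernel costs $(t-s)^{-3/2}$, and pairing two critical norms (e.g.\ $\norm{n}_{L^\infty}\norm{\na c}_{L^\infty}\sim s^{-3/2}$) gives a non-integrable singularity at $s=0$; one factor must always be taken at a subcritical exponent, yielding terms of the form $\ep_1\calN$ or $\ep_1^{\theta}\calN^{2-\theta}$ rather than $\calN^2$. These still close, but the bookkeeping is exactly where the proof lives. Second, and more importantly, the subcritical control $\norm{\om(s)}_{L^p}\lesssim \ep_1 s^{-(1-1/p)}$ for $p\in(1,2)$ is load-bearing for \emph{every} equation (it is what puts $u$ in $L^q$, $q>2$, with the right decay via Biot--Savart), and unlike $n$ it \emph{cannot} be recovered by interpolation against a conserved $L^1$ norm, since $\norm{\om(t)}_{L^1}$ is not controlled for the forced vorticity equation. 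Your anchor $r_0\in(1,2)$ happens to supply it, but you present that choice as a convenience and the range $1<r<\infty$ as an afterthought to be ``filled in by a second anchor''; in fact the $r_0\in(1,2)$ component of your functional is the essential ingredient (it plays the role of the paper's Lemma \ref{lemma-ncw-100}), and for the $L^r$ estimate with large $r$ you will need $\norm{\om}_{L^{\tilde r}}$ at the specific dual exponent $\tilde r=r/(r-1)$, so the anchor must be chosen per target $r$ (with constants degenerating as $r\to\infty$), not fixed once and for all.
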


The proof of Proposition \ref{time-decay} will be presented in the
series of lemmas. Lemma \ref{lemma-higher-est} considers the decays
of $\| n\|_{L^{\infty}}(t), \| \na c\|_{L^{\infty}}(t), \|
\om\|_{L^r}(t)$, and Lemma \ref{CKL-Dec05-100} shows the decays of
quantities
with derivatives. Notice that the decay rates in
\eqref{CKL10-june14} and \eqref{CKL10-dec12} are the same as in the
$L^q-L^1$ estimate for the two dimensional heat equation. In this
regard our approach is to see the system \eqref{KSNS} as the
perturbed heat equations with the smallness assumption
\eqref{CKL-assumption10}, and to apply the linear heat kernel
estimates
\begin{equation}\label{heatK}
\| \na ^{\al} e^{- \Del t }u \|_{L^q(\R^2)}  \le
C t^{- (1/r-1/q) - |\al|/2} \| u\|_{L^r(\bbr^2)}, \qquad 1 \le r\le
q \le \infty.
\end{equation}
In doing so, we need an intermediate step (Lemma \ref{lemma-ncw-100}
shown below), which establishes $(n, \na c, \om)$ to be small in a
weighted norms in time variable (Lemma \ref{lemma-higher-est} and
Lemma \ref{CKL-Dec05-100} shown below). This types of estimates for
weighted norms can be found in \cite{Na}. Due to Lemma
\ref{lemma-ncw-100} we work out Lemma \ref{lemma-higher-est} and
Lemma
 \ref{CKL-Dec05-100} so that the nonlinear terms in the Duhamel's formula  are estimated by either
 quadratic terms or terms multiplied with small parameter $\ep_1$ (see e.g. \eqref{Dec03-CKL150}).\\
%
%
Let us introduce some spaces of
functions defined as follows:
\begin{equation}\label{Nov17-CKL40}
\norm{n}_{\calK_p(\R^2)}:=\sup_{t\ge
0}t^{1-\frac{1}{p}}\norm{n(t)}_{L^p(\R^2)},
\end{equation}
\begin{equation}\label{Nov17-CKL50}
\norm{c}_{\calN_q(\R^2)}:= \sup_{t\ge
0}t^{\frac{1}{2}-\frac{1}{q}}\norm{\nabla c(t)}_{L^q(\R^2)},
\end{equation}
\begin{equation}\label{Nov17-CKL60}
\norm{\omega}_{\calK_r(\R^2)}:=\sup_{t\ge
0}t^{1-\frac{1}{r}}\norm{\omega(t)}_{L^r(\R^2)}.
\end{equation}
For convenience,
we denote
\[
\norm{(n, c,
u)}_{\calK_{p,q,r}}:=\norm{n}_{\calK_p}+\norm{c}_{\calN_q}+\norm{\omega}_{\calK_r}.
\]

\begin{lemma}\label{lemma-ncw-100}
Let $n, c$ and $\omega$ be solutions of
\eqref{CKL20-dec12}-\eqref{CKL20-jan9}. Suppose that the assumptions
in Theorem \ref{Theorem1} are satisfied, and $p, q, r$ are in the
range of
\begin{equation}\label{Nov17-CKL20}
\frac{4}{3}<p<2,\qquad 2<q<4,\qquad 1<r<2.
\end{equation}
Then, we have
\begin{equation}\label{Dec1-CKL10}
\norm{(n,c,\omega)}_{\calK_{p,q,r}} \le
C(\norm{n_0}_{L^1}+\norm{c_0}_{L^{\infty}}
+\norm{\omega_0}_{L^1})\le C\epsilon_1.
\end{equation}
\end{lemma}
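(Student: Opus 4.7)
The plan is to close a coupled bootstrap on the three time-weighted norms $\calK_p$, $\calN_q$, $\calK_r$ via Duhamel's principle together with the heat-kernel estimate \eqref{heatK}. First I would record two a priori facts that tame the nonlinearities. Since $u$ is divergence-free and $k(c)n\ge 0$, the maximum principle applied to \eqref{CKL30-dec12} yields $\|c(t)\|_{L^\infty}\le \|c_0\|_{L^\infty}\le\epsilon_1$, while integrating \eqref{CKL20-dec12} gives mass conservation $\|n(t)\|_{L^1}=\|n_0\|_{L^1}\le\epsilon_1$. Combined with $k(0)=0$ and $k',\chi'\ge 0$, these bound $\chi(c)$ by a constant and force $k(c)\lesssim\epsilon_1$ pointwise, so the $k(c)$-type terms automatically carry a small factor.

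Next I would recast each equation in conservative form so as to extract a factor $(t-s)^{-1/2}$ from the heat kernel. Using $\nabla\cdot u=0$ and the identity $\nabla^{\perp}(n\nabla\phi)=-\nabla\cdot(n\nabla^{\perp}\phi)$, Duhamel gives, schematically,
\begin{align*}
n(t) &= e^{t\Delta}n_0 - \int_0^t \nabla\cdot e^{(t-s)\Delta}\bkt{un+\chi(c)n\nabla c}(s)\,ds,\\
\nabla c(t) &= \nabla e^{t\Delta}c_0 - \int_0^t \nabla e^{(t-s)\Delta}\bkt{u\cdot\nabla c+k(c)n}(s)\,ds,\\
\omega(t) &= e^{t\Delta}\omega_0 - \int_0^t \nabla\cdot e^{(t-s)\Delta}\bkt{u\omega-n\nabla^{\perp}\phi}(s)\,ds.
\end{align*}
Each nonlinear integrand is then estimated by H\"older's inequality, with $u$ controlled in $L^{p_1}$ via Biot-Savart ($\|u\|_{L^{p_1}}\lesssim \|\omega\|_{L^r}$, $\tfrac{1}{p_1}=\tfrac{1}{r}-\tfrac{1}{2}$), $\nabla\phi\in L^1\cap L^{\infty}$, and $\chi(c), k(c)$ bounded as above. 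Substituting \eqref{Nov17-CKL40}--\eqref{Nov17-CKL60} and applying \eqref{heatK}, the resulting time integrals collapse via \eqref{CKL-Jan14-10} to reproduce exactly the weights $t^{-(1-1/p)}$, $t^{-(1/2-1/q)}$, $t^{-(1-1/r)}$. This produces a closed inequality
\[
X \le C\bke{\|n_0\|_{L^1}+\|c_0\|_{L^\infty}+\|\omega_0\|_{L^1}} + C\epsilon_1 X + CX^2,\qquad X:=\|(n,c,\omega)\|_{\calK_{p,q,r}},
\]
and a standard continuity argument, anchored at the small-time local existence from \cite{ckl-cpde}, yields $X\le C\epsilon_1$ once $\epsilon_1$ is small enough.

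The main obstacle will be selecting H\"older pairings so that simultaneously each product lies in a Lebesgue space of valid exponent $a\ge 1$, the inequality \eqref{CKL-Jan14-10} applies with both exponents strictly positive, and the resulting time power precisely matches the weight on the left-hand side. The ranges $p>4/3$, $q<4$, $r>1$ are tailored to make these three requirements compatible: for $n$, the critical pairing $\|\chi(c)n\nabla c\|_{L^a}\lesssim \|n\|_{L^p}\|\nabla c\|_{L^q}$ with $1/a=1/p+1/q$ forces $p>4/3$ and $q<4$; for $\omega$, the transport term $u\omega$ forces $r>1$, and when $r$ is close to $1$ the natural pairing must be modified by placing $\omega$ in an intermediate $L^{r'}$ first and recovering $L^r$ through an additional iteration. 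Once a consistent choice of exponents is in place, closing the bootstrap is routine.
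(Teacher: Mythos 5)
Your overall strategy is exactly the paper's: write \eqref{CKL20-dec12}--\eqref{CKL40-dec12} in Duhamel form, estimate each nonlinear term by H\"older together with the heat-kernel bound \eqref{heatK} and Biot--Savart, insert the weighted norms \eqref{Nov17-CKL40}--\eqref{Nov17-CKL60}, reduce the time integrals with \eqref{CKL-Jan14-10}, and close a quadratic inequality by a continuity argument. The one place your argument does not close as written is the final inequality. The forcing term $\nabla^{\perp}e^{(t-s)\Delta}(n\nabla\phi)$ in the vorticity equation contributes, with your pairing, a term $C\norm{\nabla\phi}_{L^2}\norm{n}_{\calK_p}$ to the bound for $\norm{\omega}_{\calK_r}$ (this is \eqref{CKL-Jan12-40} in the paper). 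That term is \emph{linear} in $X$ with coefficient $C\norm{\nabla\phi}_{L^2}$, a fixed $O(1)$ constant rather than $O(\epsilon_1)$; your claimed closed form $X\le C\epsilon_1+C\epsilon_1X+CX^2$ silently drops it, and with it present the inequality $X\le C\epsilon_1+C'X+CX^2$ cannot be closed when $C'\geq 1$. (Your observation that $k(0)=0$ and $\norm{c}_{L^\infty}\le\norm{c_0}_{L^\infty}$ render the coefficient of the analogous term $C\norm{k(c)}_{L^\infty}\norm{n}_{\calK_p}$ in the $c$-estimate small is correct and is a genuine simplification over the paper, but it does not help with $\phi$, which is time-independent data carrying no smallness.)

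The paper's fix is a weighted linear combination: setting $M_1=C\norm{k(c)}_{L^\infty}$ and $M_2=C\norm{\nabla\phi}_{L^2}$, it multiplies the $n$-inequality \eqref{CKL-Jan12-20} by $2(M_1+M_2)$ before summing, so that the linear cross terms $(M_1+M_2)\norm{n}_{\calK_p}$ arising from the $c$- and $\omega$-estimates are absorbed into the left-hand side, leaving only genuinely quadratic remainders and yielding \eqref{CKL-Jan12-60}. Equivalently, you could first close the self-contained $n$-estimate (whose right-hand side is $C\norm{n_0}_{L^1}$ plus quadratic terms only), conclude $\norm{n}_{\calK_p}\lesssim\epsilon_1$ under the bootstrap hypothesis, and then substitute this into the $\omega$-estimate so that the $n\nabla\phi$ contribution becomes $C\norm{\nabla\phi}_{L^2}\epsilon_1$. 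Either repair is two lines, but some such step is required. The rest of your exponent bookkeeping, including the caveat that the $u\omega$ pairing degenerates for $r$ near $1$ and needs an intermediate exponent, is consistent with the paper's computation.
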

\begin{proof}
First, we write the equations as integral representation.
\begin{equation}\label{Nov28-CKL80}
n(t)=e^{t{\Delta}}n_0+\int_0^t\nabla
e^{(t-s){\Delta}}\bke{\chi(c)n(s)\nabla c(s)}ds+\int_0^t\nabla
e^{(t-s){\Delta}}\bke{u(s)n(s)}ds ,
\end{equation}
\begin{equation}\label{Nov28-CKL90}
c(t)=e^{t{\Delta}}c_0-\int_0^t
e^{(t-s){\Delta}}\bke{k(c)n(s)}ds-\int_0^t
e^{(t-s){\Delta}}\bke{u(s)\nabla c(s)}ds,
\end{equation}
\begin{equation}\label{Nov28-CKL100}
\omega(t)=e^{t{\Delta}}\omega_0+\int_0^t
\nabla^{\perp}e^{(t-s){\Delta}}\bke{n(s)\nabla\phi}ds+\int_0^t
\nabla e^{(t-s){\Delta}}\bke{u(s) \omega(s)}ds,
\end{equation}
where $\nabla^{\perp}=(-\partial_{x_2}, \partial_{x_1})$. Using the
estimate of the heat kernel, we obtain
\[
\| n(t) \|_{L^p} \lesssim t^{-1+\frac{1}{p}} \norm{n_0 }_{L^1}
+\int_0^t \norm{ \nabla e^{(t-s)\Delta}}_{L^{\alpha}}
\norm{n(s)}_{L^p} \norm{\nabla c(s)}_{L^q}ds
\]
\begin{equation}\label{Nov28-00}
+\int_0^t \norm{\nabla e^{(t-s)\Delta}}_{L^{\alpha'}}\norm{ u(s)
}_{L^{\frac{2r}{2-r}}}\norm{n (s) }_{L^p} ds=t^{-1+\frac{1}{p}}
\norm{n_0 }_{L^1}+I_1+I_2,
\end{equation}
where $1+\frac{1}{p}=\frac{1}{\alpha}+\frac{1}{p}+\frac{1}{q}$ and
$\frac32-\frac{1}{r}=\frac{1}{\alpha'}$. We estimate $I_1$ and $I_2$
as follows:
\[
I_1 \lesssim \int_0^t
\frac{1}{(t-s)^{\frac32-\frac{1}{\alpha}}}\cdot
\frac{1}{s^{\frac32-\frac{1}{p}-\frac{1}{q}}}
ds\norm{n}_{\calK_p}\norm{c}_{\calN_r}
\lesssim
\frac{1}{t^{1-\frac{1}{p}}}\norm{n}_{\calK_p}\norm{c}_{\calN_r},
\]
\[
I_2 \lesssim \int_0^t
\frac{1}{(t-s)^{\frac32-\frac{1}{\alpha'}}}\cdot
\frac{1}{s^{2-\frac{1}{r}-\frac{1}{p}}} ds \norm{\omega
}_{\calK_r}\norm{n}_{\calK_p}
\lesssim \frac{1}{t^{1-\frac{1}{p}}} \norm{\omega
}_{\calK_r}\norm{n}_{\calK_p},
\]
where we used \eqref{CKL-Jan14-10}. Therefore, we obtain
\begin{equation}\label{CKL-Jan12-20}
\norm{n}_{\calK_p}\leq C\norm{n_0
}_{L^1}+C\norm{n}_{\calK_p}\norm{c}_{\calN_r}+C\norm{\omega
}_{\calK_r}\norm{n}_{\calK_p}.
\end{equation}
Similarly, we obtain
\[
\norm{c}_{\calN_q} \le C\norm{ c_0}_{L^\infty} +C\sup |k(c)|
\norm{n}_{\calK_p} +C\norm{c}_{\calN_q}\norm{\omega}_{\calK_r}
\]
\begin{equation}\label{CKL-Jan12-30}
\le C\norm{ c_0}_{L^\infty} +C\norm{k(c)}_{L^{\infty}}
\norm{n}_{\calK_p} +C\norm{c}_{\calN_q}\norm{\omega}_{\calK_r}.
\end{equation}
Next, we estimate the vorticity.
\[
\norm{\omega (t)}_{L^r} \lesssim t^{-1+\frac{1}{r}}
\norm{\omega_0}_{L^1} +\int_0^t \norm{\nabla
e^{(t-s)\Delta}}_{L^{\alpha}}\norm{n(s)}_{L^p} \norm{\nabla \phi
}_{L^{2}}
\]
\[
+ \int_0^t \norm{\nabla e^{(t-s)\Delta}}_{L^{\alpha'}}
\norm{u}_{L^{\frac{2r}{2-r}}} \norm{\omega}_{L^r}
ds=t^{-1+\frac{1}{r}} \norm{\omega_0}_{L^1}+J_1+J_2,
\]
where $\frac{1}{r}=\frac{1}{\alpha} +\frac{1}{p} -\frac{1}{2}$ and
$\frac{1}{\alpha'}=\frac32-\frac{1}{r}$. Similar estimates as above
yield
\[
J_1 \lesssim \int_0^t \frac{1}{(t-s)^{\frac32-\frac{1}{\alpha}}}
\frac{1}{s^{1-\frac{1}{p}}} ds \norm{\nabla \phi
}_{L^{2}}\norm{n}_{\calK_p} \lesssim
\frac{1}{t^{1-\frac{1}{r}}}\norm{\nabla \phi
}_{L^{2}}\norm{n}_{\calK_p}.
\]
On the other hand, via $\norm{u(t)}_{L^s}\lesssim
\norm{\omega(t)}_{L^r}$ with $1/r=1/s+1/2$, we obtain
\[
J_2 \lesssim \int_0^t \frac{1}{(t-s)^{\frac32-\frac{1}{\alpha'}}}
\frac{1}{s^{2(1-\frac{1}{r})}} ds \norm{\omega }_{\calK_r}^2
\lesssim\frac{1}{t^{1-\frac{1}{r}}}\norm{\omega }_{\calK_r}^2.
\]
Thus, we have
\begin{equation}\label{CKL-Jan12-40}
\norm{\omega }_{\calK_r}\le C\norm{\omega_0}_{L^1} +C\norm{\nabla
\phi }_{L^{2}}\norm{n}_{\calK_p}+C\norm{\omega }_{\calK_r}^2.
\end{equation}
Here we set $M_1:=C\norm{k(c)}_{L^{\infty}}$ and $M_2:=C\norm{\nabla
\phi }_{L^{2}}$, where $C$ are the constants in \eqref{CKL-Jan12-30}
and \eqref{CKL-Jan12-40}. Multiplying \eqref{CKL-Jan12-20} with
$2(M_1+M_2)$ and summing up the above estimates,
\[
(M_1+M_2)\norm{n}_{\calK_p}+\norm{c}_{\calN_q} +\norm{\omega
}_{\calK_r}\leq
C(2(M_1+M_2)\norm{n_0}_{L^1}+\norm{c_0}_{L^{\infty}}+\norm{\omega_0}_{L^1})
\]
\begin{equation}\label{CKL-Jan12-50}
+2C(M_1+M_2)\norm{n}_{\calK_p}\norm{c}_{\calN_r}+2C(M_1+M_2)\norm{\omega
}_{\calK_r}\norm{n}_{\calK_p}
+C\norm{c}_{\calN_q}\norm{\omega}_{\calK_r}+C\norm{\omega
}_{\calK_r}^2.
\end{equation}
Therefore, we obtain
\begin{equation}\label{CKL-Jan12-60}
\norm{(n,c,\omega)}_{\calK_{p,q,r}} \le
C(\norm{n_0}_{L^1}+\norm{c_0}_{L^{\infty}}+\norm{\omega_0}_{L^1})
+C\norm{(n,c,\omega)}_{\calK_{p,q,r}}^2.
\end{equation}
We deduce the lemma by the standard theory of local well-posedness
argument.
\end{proof}

Next we show the decay of $(n, c, \omega)$ in $L^{\infty}\times
L^{\infty}\times L^{r}$ for $2\le r <\infty$.

\begin{lemma}\label{lemma-higher-est}
Let $n, c$ and $\omega$ be solutions of
\eqref{CKL20-dec12}-\eqref{CKL20-jan9}. If the assumptions in
Theorem \ref{Theorem1} are satisfied, then
\begin{align}
\label{Dec3-CKL10}
\norm{n(t)}_{L^{\infty}(\R^2)}\leq \frac{C\epsilon_1}{t},\qquad
\norm{\nabla c(t)}_{L^{\infty}(\R^2)}\leq
\frac{C\epsilon_1}{t^{\frac{1}{2}}},\\
 \label{Dec3-CKL20}
\norm{\omega(t)}_{L^{r}(\R^2)}\leq
\frac{C\epsilon_1}{t^{1-\frac{1}{r}}},\qquad 2\le r<\infty.
\end{align}
\end{lemma}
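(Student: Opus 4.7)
The strategy is a bootstrap from the $\calK_{p,q,r}$ bounds of Lemma \ref{lemma-ncw-100} to the stronger $L^\infty$ and $L^r$ ($r\ge 2$) decay estimates asserted here. I introduce, for each $T>0$ and each $r\in[2,\infty)$,
\[
A(T):=\sup_{0<t\le T}t\,\norm{n(t)}_{L^\infty},\quad B(T):=\sup_{0<t\le T}t^{\frac12}\norm{\nabla c(t)}_{L^\infty},\quad D_r(T):=\sup_{0<t\le T}t^{1-\frac1r}\norm{\omega(t)}_{L^r},
\]
each finite on compact time intervals by local well-posedness. The goal is to show $A(T)+B(T)+D_r(T)\le C\epsilon_1$ uniformly in $T$, which immediately yields \eqref{Dec3-CKL10}--\eqref{Dec3-CKL20}. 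By a standard continuation argument, it suffices to prove the a priori inequality
\[
A+B+D_r\le C\epsilon_1+C\epsilon_1(A+B+D_r)+C(A+B+D_r)^2,
\]
which closes for $\epsilon_1$ sufficiently small.

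The a priori estimate starts from the Duhamel representations \eqref{Nov28-CKL80}--\eqref{Nov28-CKL100}. The linear terms are bounded by \eqref{heatK}, producing $C\epsilon_1$ times the correct $t$-power. For every nonlinear term I split the time integral as $\int_0^t=\int_0^{t/2}+\int_{t/2}^t$. On $[0,t/2]$, $(t-s)^{-\alpha}\lesssim t^{-\alpha}$, so I pull the $t$-factor out and bound the $s$-integrand in some $L^p$ (for finite $p$) by H\"older using the norms $\calK_p,\calN_q,\calK_r$ of Lemma \ref{lemma-ncw-100} (each $\le C\epsilon_1$) paired with the bootstrap norms $A,B,D_r$; for instance, for $\int_0^{t/2}\nabla e^{(t-s)\Delta}(\chi(c)n\nabla c)\,ds$ in $L^\infty$ I use $\norm{n\nabla c}_{L^p}\le\norm{n}_{L^p}\norm{\nabla c}_{L^\infty}$ with $p\in(4/3,2)$, yielding a contribution of order $\epsilon_1 B/t$. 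On $[t/2,t]$, $s\sim t$, so I pull out the bootstrap decays $A/s,\ B/s^{1/2},\ D_r/s^{1-1/r}$ and pair with a mild heat-kernel factor such as $\norm{\nabla e^{(t-s)\Delta}f}_{L^\infty}\lesssim(t-s)^{-1/2}\norm{f}_{L^\infty}$ coming from \eqref{heatK}, producing quadratic contributions of the type $AB/t$; the elementary integrals \eqref{CKL-Jan14-10}--\eqref{CKL-Jan14-20} handle the resulting time convolutions. The same scheme applies to the $\nabla c$ equation (with one extra derivative on the heat kernel) and to the $\omega$ equation in $L^r$ for $r\ge 2$ (where $\nabla\phi\in L^1\cap L^\infty$ makes the forcing tame).

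The main obstacle is the Biot-Savart drift. For $s\in[0,t/2]$ one can afford $u$ in $L^{s'}$ with finite $s'\in(2,\infty)$, which Hardy-Littlewood-Sobolev controls via $\norm{u}_{L^{s'}}\lesssim\norm{\omega}_{L^{2s'/(s'+2)}}\lesssim\epsilon_1$ through Lemma \ref{lemma-ncw-100}. For $s\in[t/2,t]$ the natural pairing wants $\norm{u}_{L^\infty}$, which is not controlled by any single $L^{s'}$ norm of $\omega$ in $\R^2$; I circumvent this by a two-sided Gagliardo-Nirenberg interpolation
\[
\norm{u}_{L^\infty(\R^2)}\le C\norm{\omega}_{L^{r_1}}^{\theta}\norm{\omega}_{L^{r_2}}^{1-\theta},\qquad 1<r_1<2<r_2<\infty,
\]
mixing the $\calK_{r_1}$ bound from Lemma \ref{lemma-ncw-100} with the bootstrap quantity $D_{r_2}$. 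A secondary delicate point is the joint choice of H\"older and interpolation exponents in each nonlinear term so that on $[0,t/2]$ the $s$-integral is convergent at $s=0$ while the resulting $t$-power matches the target decay exactly; the open ranges in \eqref{Nov17-CKL20} are tailored precisely to make such choices possible.
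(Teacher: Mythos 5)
Your proposal is correct in substance and follows the same skeleton as the paper's proof: Duhamel representations \eqref{Nov28-CKL80}--\eqref{Nov28-CKL100}, the split $\int_0^t=\int_0^{t/2}+\int_{t/2}^t$, the weighted sup norms $\calK_\infty,\calN_\infty,\calK_r$, heavy use of Lemma \ref{lemma-ncw-100} to make nonlinear contributions either $O(\epsilon_1)$ or quadratic, and a continuity/bootstrap closing. The one genuine divergence is the Biot--Savart drift on $[t/2,t]$: you estimate $\norm{u}_{L^\infty}$ by a two-sided interpolation $\norm{u}_{L^\infty}\lesssim\norm{\omega}_{L^{r_1}}^{\theta}\norm{\omega}_{L^{r_2}}^{1-\theta}$ with $r_1<2<r_2$, whereas the paper never touches $\norm{u}_{L^\infty}$ at all --- it introduces conjugate exponents $2^{\pm}$, keeps $u$ in $L^{2^+}$ (controlled by $\norm{\omega}_{L^{\alpha/(\alpha-1)}}$ with $\alpha/(\alpha-1)<2$, hence by Lemma \ref{lemma-ncw-100} alone), and pays instead a still-integrable heat-kernel singularity $(t-s)^{-(1-1/\alpha)}$. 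Both devices work, but the paper's choice has a structural payoff you should be aware of: it keeps the $n$- and $\nabla c$-estimates decoupled from the bootstrapped norms $D_{r_2}$ with $r_2\ge 2$, so the $(A,B)$ system closes first and the $\omega$ estimate follows afterwards. In your version $D_{r_2}$ enters the $A$- and $B$-inequalities through $\norm{u}_{L^\infty}$, while the forcing term $\nabla^{\perp}(n\nabla\phi)$ feeds $A$ back into $D_{r_2}$ with the \emph{non-small} coefficient $\norm{\nabla\phi}_{L^2}$; consequently the single summed inequality $A+B+D_r\le C\epsilon_1+C\epsilon_1(A+B+D_r)+C(A+B+D_r)^2$ does not hold literally as written. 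It is repairable --- estimate the forcing via $\norm{n}_{L^2}\le\norm{n}_{L^1}^{1/2}\norm{n}_{L^\infty}^{1/2}\lesssim\epsilon_1^{1/2}(A/t)^{1/2}$ and use Young's inequality to split $\epsilon_1^{1/2}A^{1/2}\le\epsilon_1+\tfrac14A$, absorbing the $A$-piece into the left side --- but you should state this explicitly, since as announced your closing inequality carries a linear term with an $O(1)$ coefficient.
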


\begin{proof}
For convenience, we denote
\begin{equation*}
\norm{n}_{\calK_{\infty}(\R^2)}:=\sup_{t\ge
0}t\norm{n(t)}_{L^{\infty}(\R^2)}, \qquad\norm{
c}_{\calN_{\infty}(\R^2)}:= \sup_{t\ge 0}t^{\frac{1}{2}}\norm{\nabla
c(t)}_{L^{\infty}(\R^2)},
\end{equation*}
\begin{equation*}
\norm{\omega}_{\calK_r(\R^2)}:=\sup_{t\ge
0}t^{1-\frac{1}{r}}\norm{\omega(t)}_{L^r(\R^2)},\qquad 1< r<\infty.
\end{equation*}
Using the estimate of heat kernel, we obtain
\[
\norm{n}_{L^{\infty}}(t)\lesssim \frac{1}{t}\norm{n_0}_{L^1}
+\int_0^t\norm{\nabla e^{(t-s)\Delta} n\nabla c}_{L^{\infty}}(s)ds
\]
\[
+\int_0^t\norm{\nabla e^{(t-s)\Delta} un}_{L^{\infty}}(s) ds=
\frac{1}{t}\norm{n_0}_{L^1}+I_1+I_2.
\]
We first estimate $I_1$.
\[
I_1\lesssim \int_0^{t/2}\frac{1}{(t-s)^{\frac{3}{2}}}\norm{n\nabla
c}_{L^1}(s)ds
+\int_{t/2}^t\frac{1}{(t-s)^{\frac{1}{2}}}\norm{n\nabla
c}_{L^{\infty}}(s)ds
\]
\[
\lesssim
\int_0^{t/2}\frac{1}{(t-s)^{\frac{3}{2}}}\norm{n}_{L^1}\norm{\nabla
c}_{L^{\infty}}ds
+\int_{t/2}^t\frac{1}{(t-s)^{\frac{1}{2}}}\norm{n}_{L^{\infty}}\norm{\nabla
c}_{L^{\infty}}ds
\]
\begin{equation}\label{Dec03-CKL120}
\lesssim 
\frac{\epsilon_1}{t}\norm{\nabla c}_{\calN_{\infty}(\R^2)}
+\frac{1}{t}\norm{n}_{\calK_{\infty}(\R^2)}\norm{
c}_{\calN_{\infty}(\R^2)},
\end{equation}
where we used \eqref{CKL-Jan14-20}. For convenience, we introduce
H\"{o}lder conjugate numbers $2^+$ and $2^-$ so that
\[1/2^+=1/2-1/\alpha, \quad
1/2^-=1/2+1/\alpha, \quad 2<\alpha<\infty.\]  We then estimate $I_2$
as follows:
\[
I_2\lesssim
\int_0^{t/2}\frac{1}{(t-s)^{\frac{3}{2}}}\norm{un}_{L^1}(s)ds
+\int_{t/2}^t\frac{1}{(t-s)^{\frac{3}{2}-\frac{1}{2^{-}}}}\norm{un}_{L^{2^{+}}}(s)ds
\]
\[
\lesssim
\int_0^{t/2}\frac{1}{(t-s)^{\frac{3}{2}}}\norm{u}_{L^{2^+}}\norm{n}_{L^{2^-}}ds
+\int_{t/2}^t\frac{1}{(t-s)^{\frac{3}{2}-\frac{1}{2^{-}}}}\norm{u}_{L^{2^+}}\norm{n}_{L^{\infty}}ds
\]
\[
\lesssim
\int_0^{t/2}\frac{1}{(t-s)^{\frac{3}{2}}}\norm{u}_{L^{2^+}}\norm{n}_{L^{2^-}}ds
+\int_{t/2}^t\frac{1}{(t-s)^{\frac{3}{2}-\frac{1}{2^{-}}}}\norm{u}_{L^{2^+}}\norm{n}_{L^{\infty}}ds
\]
\[
\lesssim
\frac{1}{t^{\frac{3}{2}}}\int_0^{t/2}\norm{\omega}_{L^{\frac{\alpha}{\alpha-1}}}\norm{n}_{L^{2^-}}ds
+\int_{t/2}^t\frac{1}{(t-s)^{\frac{3}{2}-\frac{1}{2^{-}}}}\norm{\omega}_{L^{\frac{\alpha}{\alpha-1}}}\norm{n}_{L^{\infty}}ds
\]
\begin{equation}\label{Dec31-CKL10}
\lesssim\frac{1}{t}\norm{n}_{\calK_{2^-}(\R^2)}\norm{\omega}_{\calK_{\frac{\alpha}{\alpha-1}}(\R^2)}+
\frac{1}{t}\norm{\omega}_{\calK_{\frac{\alpha}{\alpha-1}}(\R^2)}\norm{n}_{\calK_{\infty}(\R^2)}
\lesssim\frac{\epsilon^2_1}{t}+\frac{\epsilon_1}{t}\norm{n}_{\calK_{\infty}(\R^2)},
\end{equation}
where we used the result in Lemma \ref{lemma-ncw-100}. Adding the
estimates, we obtain
\begin{equation}\label{Dec31-CKL20}
\norm{n}_{L^{\infty}}(t)\lesssim
\frac{\epsilon_1}{t}+\frac{\epsilon_1}{t}\norm{n}_{\calK_{\infty}(\R^2)}
+\frac{\epsilon_1}{t}\norm{c}_{\calN_{\infty}(\R^2)}
+\frac{1}{t}\norm{n}_{\calK_{\infty}(\R^2)}\norm{
c}_{\calN_{\infty}(\R^2)}.
\end{equation}
On the other hand, $\nabla c$ is computed as follows:
\[
\norm{\nabla c}(t)\lesssim
\frac{1}{t^{\frac{1}{2}}}\norm{c_0}_{L^{\infty}}
+\int_0^t\norm{\nabla e^{(t-s)\Delta} k n}_{L^{\infty}}(s)ds
\]
\[
+\int_0^t\norm{\nabla e^{(t-s)\Delta}(u\nabla c)}_{L^{\infty}}(s)ds
=\frac{1}{t^{\frac{1}{2}}}\norm{c_0}_{L^{\infty}}+J_1+J_2.
\]
Firstly, we estimate $J_1$.
\[
J_1\lesssim \int_0^{t/2}\frac{1}{(t-s)^{\frac{3}{2}}}\norm{k
n(s)}_{L^1}ds+\int_{t/2}^t\frac{1}{(t-s)^{\frac{1}{2}}}\norm{k
n(s)}_{L^{\infty}}ds
\]
\begin{equation}\label{Dec03-CKL130}
\lesssim
\frac{1}{t^{\frac{1}{2}}}\norm{k(c)}_{L^{\infty}}\norm{n}_{L^1}
+\frac{1}{t^{\frac{1}{2}}}\norm{k(c)}_{L^{\infty}}\norm{n}_{\calK_{\infty}(\R^2)}\lesssim
\frac{\epsilon_1}{t^{\frac{1}{2}}}
+\frac{\epsilon_1}{t^{\frac{1}{2}}}\norm{n}_{\calK_{\infty}(\R^2)}.
\end{equation}
Before we estimate $J_2$, we set $1/4^+=1/4-1/\beta$ and
$1/4^-=1/4+1/\beta$ with $\beta>4$. We then estimate $J_2$.
\[
J_2\lesssim \int_0^{t/2}\frac{1}{t-s}\norm{u\nabla
c}_{L^2}ds+\int_{t/2}^t\frac{1}{(t-s)^{\frac{3}{2}-\frac{1}{2^{-}}}}
\norm{u\nabla c}_{L^{2^{+}}}(s)ds
\]
\[
\lesssim \frac{1}{t}\int_0^{t/2}\norm{u}_{L^{4^+}}\norm{\nabla
c}_{L^{4^-}}ds+\int_{t/2}^t\frac{1}{(t-s)^{\frac{3}{2}-\frac{1}{2^{-}}}}
\norm{u}_{L^{2^{+}}}\norm{\nabla c}_{L^{\infty}}(s)ds
\]
\[
\lesssim
\frac{1}{t}\int_0^{t/2}\norm{\omega}_{L^{\frac{4\beta}{3\beta-4}}}\norm{\nabla
c}_{L^{4^-}}ds+\int_{t/2}^t\frac{1}{(t-s)^{\frac{3}{2}-\frac{1}{2^{-}}}}
\norm{\omega}_{L^{\frac{\alpha}{\alpha-1}}}\norm{\nabla
c}_{L^{\infty}}(s)ds
\]
\begin{equation}\label{Dec31-CKL30}
\lesssim
\frac{1}{t^{\frac{1}{2}}}\norm{\omega}_{\calK_{\frac{4\beta}{3\beta-4}}(\R^2)}
\norm{ c}_{\calN_{4^-}(\R^2)}+\frac{1}{t^{\frac{1}{2}}}
\norm{\omega}_{\calK_{\frac{\alpha}{\alpha-1}}(\R^2)} \norm{
c}_{\calN_{\infty}(\R^2)}\lesssim\frac{\epsilon^2_1}{t^{\frac{1}{2}}}
+\frac{\epsilon_1}{t^{\frac{1}{2}}}\norm{
c}_{\calN_{\infty}(\R^2)},
\end{equation}
where the result in Lemma \ref{lemma-ncw-100} is used. Combining
\eqref{Dec03-CKL130} and \eqref{Dec31-CKL30}, we have
\begin{equation}\label{Dec31-CKL40}
\norm{\nabla c}_{L^{\infty}}(t)\lesssim \frac{\epsilon_1}{t^{\frac{1}{2}}}
+\frac{\epsilon_1}{t^{\frac{1}{2}}}\norm{n}_{\calK_{\infty}(\R^2)}
+\frac{\epsilon_1}{t^{\frac{1}{2}}}\norm{
c}_{\calN_{\infty}(\R^2)}.
\end{equation}
Next, we estimate the vorticity. For any $1\le r< \infty$
\[
\norm{\omega (t)}_{L^r} \lesssim t^{-1+\frac{1}{r}}
\norm{\omega_0}_{L^1} +\int_0^t
\norm{\nabla^{\perp}e^{(t-s)\tilde{\Delta}}\bke{n(s)\nabla\phi}}_{L^r}ds
\]
\[
+\int_0^t \norm{\nabla
e^{(t-s)\tilde{\Delta}}\bke{u\omega}}_{L^r}ds= t^{-1+\frac{1}{r}}
\norm{\omega_0}_{L^1} +L_1+L_2.
\]
If we restrict $ 2\le r$,  we have
\[
L_1\lesssim \int_0^{t/2}
\frac{1}{(t-s)^{\frac{3}{2}-\frac{1}{r}}}\norm{n(s)}_{L^2}
\norm{\nabla \phi }_{L^{2}} +\int_{t/2}^t
\frac{1}{(t-s)^{1-\frac 1r}}\| n(s)\|_{L^{\infty}}\| \na \phi \|_{L^2}\]
\[
\lesssim \int_0^{t/2}
\frac{1}{(t-s)^{\frac{3}{2}-\frac{1}{r}}}\norm{n(s)}^{\frac{1}{2}}_{L^1}
\norm{n(s)}^{\frac{1}{2}}_{L^\infty} \norm{\nabla \phi }_{L^{2}}
\]
\begin{equation}\label{Dec03-CKL140}
+\int_{t/2}^t \frac{1}{(t-s)^{1-\frac 1r}}\norm{n(s)}_{L^{\infty}}
\norm{\nabla \phi }_{L^{2}} \lesssim
\frac{\epsilon_1}{t^{1-\frac{1}{r}}}
+\frac{1}{t^{1-\frac{1}{r}}}\norm{n}_{\calK_{\infty}(\R^2)},
\end{equation}
where we used the
H\"older's inequality and Young's inequality. The exponents $r^*, \tilde r$ are defined by
$1/r^*=1/2-1/r$ and
$1/r^*=1/\tilde{r}-1/2$. Now we estimate $L_2$.
\[
L_2\lesssim\int_0^{t/2}
\frac{1}{(t-s)^{\frac{3}{2}-\frac{1}{2^-}}}\norm{u}_{L^{2^+}}
\norm{\omega}_{L^{r}} +\int_{t/2}^t
\frac{1}{(t-s)^{1-\frac 1r}}\norm{u}_{L^{r^*}}
\norm{\omega}_{L^{r}}
\]
\[
\lesssim\frac{1}{t^{\frac{3}{2}-\frac{1}{2^-}}}\int_0^{t/2}
\norm{\omega}_{L^{\frac{\alpha}{\alpha-1}}} \norm{\omega}_{L^{r}}
+\int_{t/2}^t
\frac{1}{(t-s)^{1-\frac 1r}}\norm{\omega}_{L^{\tilde{r}}}
\norm{\omega}_{L^{r}}
\]
\begin{equation}\label{Dec31-CKL50}
\lesssim\frac{1}{t^{1-\frac{1}{r}}}\norm{\omega}_{\calK_{\frac{\alpha}{\alpha-1}}(\R^2)}
\norm{\omega}_{\calK_r(\R^2)}+
\frac{1}{t^{1-\frac{1}{r}}}\norm{\omega}_{\calK_{\tilde{r}}(\R^2)}
\norm{\omega}_{\calK_r(\R^2)}\lesssim\frac{\epsilon_1}{t^{1-\frac{1}{r}}}\norm{\omega}_{\calK_r(\R^2)},
\end{equation}
where the result in Lemma \ref{lemma-ncw-100} is used. Therefore, we
have
\begin{equation}\label{Dec31-CKL60}
\norm{\omega (t)}_{L^r} \lesssim\frac{\epsilon_1}{t^{1-\frac{1}{r}}}
+\frac{1}{t^{1-\frac{1}{r}}}\norm{n}_{\calK_{\infty}(\R^2)}
+\frac{\epsilon_1}{t^{1-\frac{1}{r}}}\norm{\omega}_{\calK_r(\R^2)}.
\end{equation}
Using the estimate \eqref{Dec31-CKL20}, we obtain
\begin{equation}\label{Jan12-CKL100}
\norm{\omega}_{\calK_r(\R^2)}\lesssim
\epsilon_1+\epsilon_1\norm{n}_{\calK_{\infty}(\R^2)}+\epsilon_1\norm{
c}_{\calN_{\infty}}+\norm{n}_{\calK_{\infty}}\norm{
c}_{\calN_{\infty}}+\epsilon_1\norm{\omega}_{\calK_r(\R^2)}.
\end{equation}
Combining estimates \eqref{Dec31-CKL20}, \eqref{Dec31-CKL40} and
\eqref{Jan12-CKL100}, we obtain
\begin{equation}\label{Dec03-CKL150}
\norm{n}_{\calK_{\infty}}+\norm{
c}_{\calN_{\infty}}+\norm{\omega}_{\calK_r}\lesssim \epsilon_1+
\epsilon_1(\norm{n}_{\calK_{\infty}}+\norm{
c}_{\calN_{\infty}}+\norm{\omega}_{\calK_r})+\norm{n}_{\calK_{\infty}}\norm{
c}_{\calN_{\infty}}.
\end{equation}
This completes the proof.
\end{proof}
We remark that the case $r=\infty$ in \eqref{Dec3-CKL20} is missing
due to Sobolev embedding inequalities.

%

Next we show estimates of higher derivatives. For convenience, we
denote
\[
\norm{\nabla n}_{\calK^1_{\infty}(\R^2)}:=\sup_{t\ge
0}t^{\frac{3}{2}}\norm{\nabla n(t)}_{L^{\infty}(\R^2)},
\qquad\norm{\nabla^2 c}_{\calK_{\infty}(\R^2)}:= \sup_{t\ge
0}t\norm{\nabla^2 c(t)}_{L^{\infty}(\R^2)},
\]
\[
\norm{\nabla\omega}_{\calK^1_r(\R^2)}:=\sup_{t\ge 0}t^{\frac
32-\frac{1}{r}}\norm{\nabla\omega(t)}_{L^r(\R^2)},\qquad 1\le r<2.
\]
\begin{lemma}\label{CKL-Dec05-100}
Let $n, c$ and $\omega$ be solutions of
\eqref{CKL20-dec12}-\eqref{CKL20-jan9}. If the assumptions in
Theorem \ref{Theorem1} are satisfied, then
\begin{equation}\label{CKL-Dec05-200}
\norm{\nabla^2 c}_{L^{\infty}}(t)\le \frac{C\epsilon_1}{t},\qquad
\norm{\nabla n}_{L^{\infty}}(t)\le
\frac{C\epsilon_1}{t^{\frac{3}{2}}},
\end{equation}
\begin{equation}\label{CKL30-jan9}
\norm{\nabla \omega}_{L^{r}}(t)\le
\frac{C\epsilon_1}{t^{\frac{3}{2}-\frac{1}{r}}},\qquad 1\le r<2.
\end{equation}
\end{lemma}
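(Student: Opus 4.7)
My plan is to mirror the approach of Lemma \ref{lemma-higher-est}, now applying $\nabla$ (respectively $\nabla^2$, $\nabla$) to the Duhamel formulas \eqref{Nov28-CKL80}--\eqref{Nov28-CKL100} for $n$, $c$, $\omega$, and then splitting each time integral at $s=t/2$. Using the weighted norms $\norm{\nabla n}_{\calK^1_\infty}$, $\norm{\nabla^2 c}_{\calK_\infty}$, $\norm{\nabla\omega}_{\calK^1_r}$ already introduced above, my target is a closed system of estimates of the schematic form
\[
\norm{\nabla n}_{\calK^1_\infty} + \norm{\nabla^2 c}_{\calK_\infty} + \norm{\nabla\omega}_{\calK^1_r} \le C\epsilon_1 + C\epsilon_1\bkp{\norm{\nabla n}_{\calK^1_\infty} + \norm{\nabla^2 c}_{\calK_\infty} + \norm{\nabla\omega}_{\calK^1_r}},
\]
from which the lemma follows after choosing $\epsilon_1$ small enough to absorb the bracketed right-hand side.

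The linear contributions $\nabla e^{t\Delta}n_0$, $\nabla^2 e^{t\Delta}c_0$, $\nabla e^{t\Delta}\omega_0$ are handled directly by \eqref{heatK} and already produce the required decays $t^{-3/2}$, $t^{-1}$, $t^{-3/2+1/r}$ multiplied by $\epsilon_1$. For the nonlinear Duhamel integrals, on $[0,t/2]$ I would leave all derivatives on the kernel and estimate each bilinear nonlinearity in $L^1$ (or a small-$L^p$) via H\"older, using the decay rates already produced by Lemma \ref{lemma-ncw-100} and Lemma \ref{lemma-higher-est} together with the Biot--Savart bound $\norm{u}_{L^s}\lesssim \norm{\omega}_{L^r}$, $1/s = 1/r - 1/2$. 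On $[t/2,t]$ the resulting kernel factor $(t-s)^{-\sigma}$ has $\sigma\ge 1$ and is not integrable, so I would transfer one derivative off the kernel onto the nonlinearity and expand by the product rule, e.g.
\[
\nabla\bkp{\chi(c)\, n \,\nabla c} = \chi'(c)\, n\, |\nabla c|^2 + \chi(c)\, \nabla n \cdot \nabla c + \chi(c)\, n\, \nabla^2 c,
\]
and similarly for $\nabla(un)$, $\nabla(n\nabla\phi)$, $\nabla(u\omega)$. The remaining integral is carried out in an intermediate $L^r$ norm with $r>2$ so that $(t-s)^{-1/2-1/r}$ is integrable near $s=t$, while the extra $1/2$--power of $(t-s)$ that was released absorbs into the weight.

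The main obstacle is the combinatorial bookkeeping: in each of the six pieces (two halves of three equations), the H\"older exponents must be chosen so that (i) the power of $t$ produced by the heat-kernel exponents together with the decay rates of the individual factors matches exactly the target exponent on the left, and (ii) every bilinear combination either pairs a factor already controlled by $\epsilon_1$ from Lemma \ref{lemma-higher-est} with one of the three unknown higher-derivative norms, or consists of two factors already controlled by $\epsilon_1$ (thereby contributing to the $C\epsilon_1$ on the right). The critical terms are $\chi(c)\,n\,\nabla^2 c$, $\chi(c)\,\nabla n \cdot \nabla c$, $u\cdot\nabla\omega$, and $\nabla u \cdot \omega$ appearing after moving the derivative onto the nonlinearity: these are the ones that couple the three unknown norms to each other and force the use of smallness. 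Once the index choices are fixed and the elementary integrals \eqref{CKL-Jan14-10}--\eqref{CKL-Jan14-20} are applied, the displayed system above follows and the smallness hypothesis \eqref{CKL-assumption10} closes the estimate, giving \eqref{CKL-Dec05-200}--\eqref{CKL30-jan9}.
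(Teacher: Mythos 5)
Your strategy is the same as the paper's: differentiate the Duhamel formulas, split at $s=t/2$, keep all derivatives on the kernel on $[0,t/2]$ and move one derivative onto the nonlinearity on $[t/2,t]$, then close in the weighted norms $\norm{\nabla n}_{\calK^1_\infty}$, $\norm{\nabla^2 c}_{\calK_\infty}$, $\norm{\nabla\omega}_{\calK^1_r}$ using Lemmas \ref{lemma-ncw-100} and \ref{lemma-higher-est} and the integrals \eqref{CKL-Jan14-10}--\eqref{CKL-Jan14-20}.

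There is, however, one step that does not go through as literally written. You claim that every bilinear term pairs an unknown higher-derivative norm with a factor of size $\epsilon_1$, so that the \emph{sum} of all three norms can be absorbed at once. This fails for the forcing term in the vorticity equation: after the product rule, $\nabla(n\nabla\phi)$ produces $\nabla n\,\nabla\phi$, and $\nabla\phi$ carries no smallness (only $\norm{\nabla^l\phi}_{L^1\cap L^\infty}<\infty$ is assumed). Hence the $\omega$-inequality contains a term of the form $C\norm{\nabla\phi}\,\norm{\nabla n}_{\calK^1_\infty}$ with a coefficient that is not $O(\epsilon_1)$, and the simultaneous absorption of the three-norm sum breaks down. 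The system is triangular, though: the $n$- and $c$-equations do not involve $\nabla\omega$, so the fix is to first close the coupled pair $\norm{\nabla n}_{\calK^1_\infty}+\norm{\nabla^2 c}_{\calK_\infty}\le C\epsilon_1$, and only then estimate $\nabla\omega$, feeding in the already-established decay of $\nabla n$ (the paper derives $\norm{\nabla n}_{L^2}(t)\le \epsilon_1/t$ as an intermediate step precisely to handle $\nabla n\,\nabla\phi$ in $L^r$, $1\le r<2$). With that reordering your argument coincides with the paper's proof; without it, the final absorption step is not justified.
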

\begin{proof}
We first estimate $\nabla^2 c$.
\[
\norm{\nabla^2 c}(t)\lesssim \frac{1}{t}\norm{c_0}_{L^{\infty}}
+\int_0^{\frac{t}{2}}\norm{\nabla^2 e^{(t-s)\Delta} k
n}_{L^{\infty}}(s)ds+\int_{\frac{t}{2}}^t\norm{\nabla
e^{(t-s)\Delta} \nabla(k n)}_{L^{\infty}}(s)ds
\]
\[
+\int_0^{\frac{t}{2}}\norm{\nabla^2 e^{(t-s)\Delta} u\nabla
c}_{L^{\infty}}(s)ds+\int_{\frac{t}{2}}^t\norm{\nabla
e^{(t-s)\Delta} \nabla(u\nabla c)}_{L^{\infty}}(s)ds.
\]
Consider the second term in the rightside.
\[
\int_0^{\frac{t}{2}}\norm{\nabla^2 e^{(t-s)\Delta} k
n}_{L^{\infty}}(s)ds\lesssim
\int_0^{\frac{t}{2}}\frac{1}{(t-s)^2}\norm{
n}_{L^{1}}\norm{k(c)}_{L^{\infty}}ds\lesssim \frac{\epsilon_1}{t}.
\]
The third term is estimated as follows:
\[
\int_{\frac{t}{2}}^t\norm{\nabla e^{(t-s)\Delta} \nabla(k
n)}_{L^{\infty}}(s)ds\lesssim\int_{\frac{t}{2}}^t\frac{1}{(t-s)^{\frac{1}{2}}}[\norm{k'(c)}_{L^{\infty}}
\norm{\nabla c n}_{L^{\infty}} +\norm{k(c)}_{L^{\infty}}\norm{\nabla
n}_{L^{\infty}}](s)ds
\]
\[
\lesssim
\norm{k'(c)}_{L^{\infty}}\int_{\frac{t}{2}}^t\frac{1}{(t-s)^{\frac{1}{2}}s^{\frac{3}{2}}}ds
+\norm{k(c)}_{L^{\infty}}\norm{\nabla
n}_{\calK^1_{\infty}}\int_{\frac{t}{2}}^t\frac{1}{(t-s)^{\frac{1}{2}}s^{\frac{3}{2}}}ds
\lesssim \frac{\epsilon_1}{t}+\frac{\epsilon_1}{t}\norm{\nabla
n}_{\calK^1_{\infty}}.
\]
We estimate the fourth and fifth terms.
\[
\int_0^{\frac{t}{2}}\norm{\nabla^2 e^{(t-s)\Delta} u\nabla
c}_{L^{\infty}}(s)ds\lesssim
\int_0^{\frac{t}{2}}\frac{1}{(t-s)^{\frac{5}{3}}} \norm{u\nabla
c}_{L^{\frac{3}{2}}}(s)ds
\]
\[
\lesssim \int_0^{\frac{t}{2}}\frac{1}{(t-s)^{\frac{5}{3}}}
\norm{u}_{L^3}\norm{\nabla c}_{L^{3}}(s)ds\lesssim
\int_0^{\frac{t}{2}}\frac{1}{(t-s)^{\frac{5}{3}}}
\norm{\omega}_{L^{\frac{6}{5}}}\norm{\nabla c}_{L^{3}}(s)ds\leq
\frac{\epsilon_1}{t}.
\]
For $p>2$ and $1<q<2$ with $1/p+1/q=1$
\[
\int_{\frac{t}{2}}^t\norm{\nabla e^{(t-s)\Delta} \nabla(u\nabla
c)}_{L^{\infty}}(s)ds\lesssim \int_{\frac{t}{2}}^t
\frac{1}{(t-s)^{\frac{3}{2}-\frac{1}{q}}}(\norm{\nabla u\nabla
c}_{L^p}+\norm{u\nabla^2 c}_{L^p})(s)ds
\]
\[
\lesssim \int_{\frac{t}{2}}^t
\frac{1}{(t-s)^{\frac{3}{2}-\frac{1}{q}}}(\norm{\omega}_{L^p}\norm{\nabla
c}_{L^{\infty}}+\norm{u}_{L^p}\norm{\nabla^2 c}_{L^{\infty}})(s)ds
\lesssim \frac{\epsilon_1}{t}+\frac{\epsilon_1}{t}\norm{\nabla^2
c}_{\calK_{\infty}}.
\]
Summing up all estimates, we obtain
\begin{equation}\label{CKL30-june15}
\norm{\nabla^2 c}_{\calK_{\infty}}\lesssim
\epsilon_1+\epsilon_1\norm{\nabla^2
c}_{\calK_{\infty}}+\epsilon_1\norm{\nabla n}_{\calK^1_{\infty}}.
\end{equation}
\indent
Next we consider $\nabla n$.
\[
\norm{\nabla n}_{L^{\infty}}(t)\lesssim
\frac{1}{t^{\frac{3}{2}}}\norm{n_0}_{L^1}
+\int_1^{\frac{t}{2}}\norm{\nabla^2 e^{(t-s)\Delta}[\chi n\nabla
c]}_{L^{\infty}}(s)ds+\int_{\frac{t}{2}}^t\norm{\nabla
e^{(t-s)\Delta}\nabla[\chi n\nabla c]}_{L^{\infty}}(s)ds
\]
\[
+\int_0^{\frac{t}{2}}\norm{\nabla^2 e^{(t-s)\Delta}[u
n]}_{L^{\infty}}(s)ds+\int_{\frac{t}{2}}^t\norm{\nabla
e^{(t-s)\Delta}[u\nabla n]}_{L^{\infty}}(s)ds.
\]
First, we compute
\[
\int_1^{\frac{t}{2}}\norm{\nabla^2 e^{(t-s)\Delta}[\chi n\nabla
c]}_{L^{\infty}}(s)ds\leq
\int_1^{\frac{t}{2}}\frac{1}{(t-s)^2}\norm{n\nabla c}_{L^1} \lesssim
\frac{\epsilon_1}{t^2}\int_1^{\frac{t}{2}}\frac{1}{s^{\frac{1}{2}}}ds
\lesssim\frac{\epsilon_1}{t^{\frac{3}{2}}}.
\]
Secondly,
\[
\int_{\frac{t}{2}}^t\norm{\nabla e^{(t-s)\Delta}\nabla[\chi n\nabla
c]}_{L^{\infty}}(s)ds\leq
\int_{\frac{t}{2}}^t\frac{1}{(t-s)^{\frac{1}{2}}}(\norm{\nabla
n\nabla c}_{L^{\infty}}+\norm{n\nabla^2
c}_{L^{\infty}}+\norm{n\abs{\nabla c}^2}_{L^{\infty}})(s)ds
\]
\[
\lesssim \norm{\nabla
n}_{\calK^1_{\infty}}\int_{\frac{t}{2}}^t\frac{\epsilon_1}{(t-s)^{\frac{1}{2}}s^{2}}ds
+\norm{\nabla^2
c}_{\calK_{\infty}}\int_{\frac{t}{2}}^t\frac{\epsilon_1}{(t-s)^{\frac{1}{2}}s^2}ds
+\int_{\frac{t}{2}}^t\frac{\epsilon_1}{(t-s)^{\frac{1}{2}}s^2}ds
\]
\[
\lesssim \frac{\epsilon_1}{t^{\frac{3}{2}}}\norm{\nabla
n}_{\calK^1_{\infty}}+\frac{\epsilon_1}{t^{\frac{3}{2}}}\norm{\nabla^2
c}_{\calK_{\infty}} +\frac{\epsilon_1}{t^{\frac{3}{2}}}.
\]
Thirdly,
\[
\int_0^{\frac{t}{2}}\norm{\nabla^2 e^{(t-s)\Delta}[u
n]}_{L^{\infty}}(s)ds\lesssim
\int_0^{\frac{t}{2}}\frac{1}{(t-s)^{\frac{5}{3}}}
\norm{un}_{L^{\frac{3}{2}}}(s)ds
\]
\[
\lesssim \int_0^{\frac{t}{2}}\frac{1}{(t-s)^{\frac{5}{3}}}
\norm{u}_{L^3}\norm{n}_{L^{3}}(s)ds\lesssim
\int_0^{\frac{t}{2}}\frac{1}{(t-s)^{\frac{5}{3}}}
\norm{\omega}_{L^{\frac{6}{5}}}\norm{n}_{L^{3}}(s)ds\leq
\frac{\epsilon_1}{t^{\frac{3}{2}}}.
\]
Lastly, for $p>2$ and $1<q<2$ with $1/p+1/q=1$
\[
\int_{\frac{t}{2}}^t\norm{\nabla e^{(t-s)\Delta}[u\nabla
n]}_{L^{\infty}}(s)ds\lesssim \int_{\frac{t}{2}}^t
\frac{1}{(t-s)^{\frac{3}{2}-\frac{1}{q}}}\norm{ u\nabla
n}_{L^p}(s)ds
\]
\[
\lesssim \int_{\frac{t}{2}}^t
\frac{1}{(t-s)^{\frac{3}{2}-\frac{1}{q}}}\norm{u}_{L^p}(s)\norm{\nabla
n}_{L^{\infty}}(s)ds\lesssim\frac{\epsilon_1}{t^{\frac{3}{2}}}\norm{\nabla
n}_{\calK^1_{\infty}}.
\]
Summing up, we obtain
\begin{equation}\label{CKL40-june15}
\norm{\nabla n}_{\calK^1_{\infty}}\lesssim \epsilon_1\norm{\nabla
n}_{\calK^1_{\infty}}+\epsilon_1\norm{\nabla^2 c}_{\calK_{\infty}}
+\epsilon_1.
\end{equation}
Combining \eqref{CKL30-june15} and \eqref{CKL40-june15}, we obtain the first assertion of the lemma:
\bq\label{lem5-1}
\| \na n\|_{\calK^1_{\infty}} + \| \na^2 c\|_{\calK_{\infty}} \le C \ep_1.
\eq
 With the above estimate in hands,  it is easy to show $\| \na n\|_{L^2}$ satisfy the following decay:
\begin{equation} \label{lem5-m}
\| \na n\|_{L^2}(t) \le \frac{\ep_1}{t}.
\end{equation}
\indent
We consider the vorticity equation. Using the integral
representation, we compute
\[
\norm{\nabla \omega}_{L^{r}(t)}\lesssim
\frac{1}{t^{\frac{3}{2}-\frac{1}{r}}}\norm{\omega_0}_{L^1}
+\int_0^{\frac{t}{2}}\norm{\nabla^2 e^{(t-s)\Delta}[u
\omega]}_{L^{r}}(s)ds+\int_{\frac{t}{2}}^t\norm{\nabla
e^{(t-s)\Delta}[u\nabla \omega]}_{L^{r}}(s)ds
\]
\[
+\int_0^{\frac{t}{2}}\norm{\nabla^2 e^{(t-s)\Delta}[n\nabla
\phi]}_{L^{r}}(s)ds+\int_{\frac{t}{2}}^t\norm{\nabla
e^{(t-s)\Delta}\nabla[ n\nabla \phi]}_{L^{r}}(s)ds.
\]
First, for $p>2$ and $1<q<2$ with $1/p+1/q=1$
\[
\int_0^{\frac{t}{2}}\norm{\nabla^2 e^{(t-s)\Delta}[u
\omega]}_{L^{r}}(s)ds\lesssim
\int_0^{\frac{t}{2}}\frac{1}{(t-s)^{2-\frac{1}{r}}}\norm{u}_{L^p}\norm{
\omega}_{L^{q}}(s)ds\lesssim
\frac{\epsilon_1}{t^{\frac{3}{2}-\frac{1}{r}}}.
\]
Secondly,
\[
\int_{\frac{t}{2}}^t\norm{\nabla e^{(t-s)\Delta}[u\nabla
\omega]}_{L^{r}}(s)ds\lesssim
\int_{\frac{t}{2}}^t\frac{1}{(t-s)^{\frac{3}{2}-\frac{1}{q}}}\norm{u}_{L^p}\norm{
\nabla\omega}_{L^{r}}(s)ds\lesssim
\frac{\epsilon_1}{t^{\frac{3}{2}-\frac{1}{r}}}\norm{\nabla\omega}_{\calK^1_r}.
\]
Thirdly,
\[
\int_0^{\frac{t}{2}}\norm{\nabla^2 e^{(t-s)\Delta}[n\nabla
\phi]}_{L^{r}}(s)ds\lesssim
\int_0^{\frac{t}{2}}\frac{1}{(t-s)^{2-\frac{1}{r}}}\norm{n}_{L^2}\norm{
\nabla\phi}_{L^{2}}(s)ds\leq
\frac{\epsilon_1}{t^{\frac{3}{2}-\frac{1}{r}}}.
\]
Lastly,
\[
\int_{\frac{t}{2}}^t\norm{\nabla e^{(t-s)\Delta}\nabla[ n\nabla
\phi]}_{L^{r}}(s)ds\lesssim \int_{\frac{t}{2}}^t\norm{\nabla
e^{(t-s)\Delta}[ \nabla n\nabla \phi+ n\nabla^2 \phi]}_{L^{r}}(s)ds
\]
\[
\lesssim \int_{\frac{t}{2}}^t\norm{\nabla e^{(t-s)\Delta}[ \nabla
n\nabla \phi+ n\nabla^2 \phi]}_{L^{r}}(s)ds \lesssim
\int_{\frac{t}{2}}^t\frac{1}{(t-s)^{\frac{3}{2}-\frac 1r}}\norm{\nabla
n}_{L^{2}}\norm{\nabla \phi}_{L^{2}}ds
\]
\[
+\int_{\frac{t}{2}}^t\frac{1}{(t-s)^{\frac{3}{2}-\frac{1}{r}}}
\norm{n}_{L^\infty}\norm{\nabla^2 \phi}_{L^{1}}ds\lesssim
\frac{\epsilon_1}{t^{\frac 32-\frac{1}{r}}}
\]
by \eqref{lem5-m} and Lemma \ref{lemma-higher-est}.
Summing up, we obtain
\begin{equation}\label{CKL10-Dec05}
\norm{\nabla\omega}_{\calK^1_r}\lesssim \epsilon_1+\epsilon_1\norm{\nabla\omega}_{\calK^1_r}.
\end{equation}
This completes the proof of the second assertion of Lemma \ref{CKL-Dec05-100}.
\end{proof}

\begin{remark}
The restriction that $r<2$ in \eqref{CKL30-jan9} is due to absence
of temporal decay of $\phi$, since $\phi$ is independent of time. We
leave it open question whether or not the estimate
\eqref{CKL30-jan9} is available for $r\ge 2$.
\end{remark}

\begin{pfprop1}
The decay estimate of solutions is the consequence of consecutive
Lemma \ref{lemma-ncw-100}-Lemma \ref{CKL-Dec05-100}.
\end{pfprop1}

\section{Proof of Theorem \ref{Theorem1}}

In this section, we present the proof of Theorem \ref{Theorem1}.

\begin{pfthm1}
We define the family of rescaled solutions in $\R^2$ \footnote{ {
$(n_k, c_k, u_k)$ solve system \eqref{KSNS} with the potential
$\phi_k$, instead of $\phi$.
}}
\[
n_{k}(x,t)=k^2 n(kx, k^2 t),\quad c_k(x,t)=c(kx, k^2 t),\quad
u_k(x,t)=k u(kx, k^2 t),\qquad \phi_k(x)=\phi(kx)
\]
with (sufficiently regular) initial data
\[
n_{k,0}(x)=k^2 n_0(kx),\qquad c_{k,0}(x)=c_0(kx),\qquad u_{k,0}(x)=k
u_0(kx).
\]
For the vorticity field, we have following rescaled solutions and
initial data
\[
\omega_{k}(x,t)=k^2 \omega(kx, k^2 t),\qquad \omega_{k,0}(x)=k^2
\omega_0(kx).
\]
We recall some invariant quantities (independent of $k$),which are
\[
\norm{n_k(t)}_{L^1}=\norm{n(t)}_{L^1}=\norm{n_0}_{L^1},\qquad \|
c_{k, 0} \| _{L^{\infty}} = \|c_0\|_{L^{\infty}},
\]
\[
\norm{\omega_{k,0}}_{L^1}=\norm{\omega_0}_{L^1},\qquad \int_{\R^2}
\omega(t)dx=\int_{\R^2} \omega_k(t)dx=\int_{\R^2} \omega_0dx.
\]
Therefore, the smallness assumption \eqref{CKL-assumption10}
is likewise valid for $(n_{k,0}, c_{k, 0}, \om_{k,0})$, namely
\[
\norm{n_{k,0}}_{L^1(\R^2)}+\norm{c_{k,0}}_{L^{\infty}(\R^2)}
+\norm{\omega_{k,0}}_{L^1(\R^2)}
<\epsilon_1.
\]
We also note that the potential ${\phi_k}$ also remains invariant by
norm of
\begin{equation}\label{CKL-Feb19-10}
\| \na \phi_k \|_{L^2} = \| \na \phi \|_{L^2}.
\end{equation}
From now on, we consider the vorticity equation, instead equation of
velocity fields. We then have global existence and time decay of
solutions $(n_k, c_k, \omega_k)$ and sequence of functions also
solves the system in a weak sense as follows: (possibly subsequence)
for $\varphi \in C_0^{\infty}(\bbr^2 \times [0, \infty))$ it holds
\begin{align} \label{wn_k} \begin{aligned}
\int_0^{\infty}\int_{\R^2} (\pa_t \varphi  +\Del \varphi) n_k + n_k
u_k\cdot\nabla \varphi + \chi(c_k)
n_k\nabla c_k\nabla \varphi \,dxdt  =\int_{\R^2}n_{k,0}\varphi(x,0)dx,\\
\int_0^{\infty} \int_{\bbr^2} (\pa_t \varphi + \Del \varphi) c_k +
c_k u_k\cdot\nabla \varphi - k(c_k) n_k\varphi dxdt =
 \int_{\R^2}c_{k,0}\varphi(x,0)dx,
\\
\int_0^{\infty} \int_{\R^2} (\pa_t \varphi + \Del \varphi) \om_k +
\om_k u_k\cdot\nabla \varphi + n_k\nabla\phi_k\nabla^{\perp}\varphi
dxdt = \int_{\R^2}\omega_k(x ,0) \varphi(x,0 )dx.
\end{aligned}\end{align}
In particular the time decay rates in Proposition
\ref{time-decay}  are scaling invariant, so
 rescaled solutions also satisfy uniform estimates
\begin{align} \label{CKL100-jan14}
\| n_k(t)\|_{L^{\infty}(\R^2)} \le \frac{C\epsilon_1}{t},&\qquad \|
\nabla n_k(t)\|_{L^{\infty}(\R^2)} \le
\frac{C\epsilon_1}{t^{\frac{3}{2}}},
\\\label{CKL110-jan14}
\|\nabla c_k(t)\|_{L^{\infty}(\R^2)} \le
\frac{C\epsilon_1}{t^{\frac{1}{2}}},&\qquad \|\nabla^2
c_k(t)\|_{L^{\infty}(\R^2)} \le \frac{C\epsilon_1}{t},
\\\label{CKL120-jan14}
\| \omega_k(t)\|_{L^{r}(\R^2)} \le
\frac{C\epsilon_1}{t^{1-\frac{1}{r}}} \quad 1<r<\infty, &\qquad \|
\na \omega_k(t)\|_{L^r(\R^2)}\le \frac{C\epsilon_1}{t^{\frac 32
-\frac{1}{r}}}\quad1\le r< 2.
\end{align}
Therefore, we have strong convergence of $(n_k, c_k, \omega_k)$ in
$L^p\times W^{1,p}\times L^r$ with $1\le p<\infty$ and  $ 1\le r
<\infty $ in any compact set in $\R^2\times (0,\infty)$. Let us
denote limit functions by $(\tilde{n}, \tilde{c}, \tilde{\omega})$
as $k\rightarrow\infty$ (possibly subsequence of $k$).  To be more
precise, there is a subsequence such that as $k_j$ tends to
infinity, for any $1\leq p< \infty$ ,  $ 1\le r <\infty$ and for all
$R, \eta_{\ep}>0$
\[
n_{k_j}\quad\longrightarrow\quad \tilde{n} \quad\mbox{ strongly in
}\,\,L^p(B_R\times (\eta_{\ep}, \eta_{\ep}^{-1})),
\]
\[
\nabla c_{k_j}\quad\longrightarrow\quad \nabla\tilde{c} \quad\mbox{
strongly in }\,\,L^p(B_R\times (\eta_{\ep}, \eta_{\ep}^{-1})),
\]
\[
\omega_{k_j}\quad\longrightarrow\quad \tilde{\omega} \quad\mbox{
strongly in }\,\,L^r(B_R\times (\eta_{\ep}, \eta_{\ep}^{-1})).
\]
We  observe that $(\tilde{n}, \tilde{c}, \tilde{\omega})$ satisfy
the estimates \eqref{CKL100-jan14}-\eqref{CKL120-jan14} . Similarly
we denote by $\tilde \phi$   the weak limit of $ \phi_k$, then
$\na \tilde \phi \in L^2(\bbr)$
 due to
\eqref{CKL-Feb19-10}.  Combining
the strong convergence in any compact domain of $\bbr^2 \times (0,
\infty)$ with these time decays, we can take the limit $k \to
\infty$ to \eqref{wn_k}, and show that $(\tilde{n},
\tilde{c}, \tilde{\omega})$ solve the following equations  in a weak
sense:
\begin{align}\label{tilde}\begin{cases}
\partial_t \tilde{n} + \tilde{u} \cdot \nabla  \tilde{n} - \Delta \tilde{n}
= -\nabla\cdot (\chi (\tilde{c}) \tilde{n} \nabla \tilde{c}),\\
\partial_t \tilde{c} + \tilde{u} \cdot \nabla \tilde{c}-\Delta \tilde{c} =-k(\tilde{c}) \tilde{n},\\
\partial_t \tilde{\omega} + \tilde{u}\cdot \nabla \tilde{\omega}
-\Delta \tilde{\omega} =-\nabla\times(\tilde{n} \nabla \tilde{\phi})
\end{cases}
\end{align}
with initial data
\begin{equation}\label{CKL230-jan14}
\tilde{n}_0=m\delta_0,\quad \tilde{c}_0=0,\quad
\tilde{\omega}_0=\gamma\delta_0,
\end{equation}
where $m$ is the total mass of $n$ and $\gamma$ is total circulation
of $\omega$. While the proof for passing to limit goes on closely
following \cite[section 2.5.1]{GGS},
 for the sake of concreteness we take some terms, say,
  $\int_0^{\infty}\int _{\bbr^2}\chi(c_k) n_k\nabla c_k\nabla \varphi \,dxdt $ and
  $\int_0^{\infty} \int_{\bbr^2} \om_k u_k \na \varphi dxdt$  to show
  \[ \lim_{k\to \infty} \int_0^{\infty} \int_{\bbr^2}\chi(c_k)
n_k\nabla c_k\nabla \varphi \,dxdt
  = \int_0^{\infty} \int_{\bbr^2}\chi(\tilde c)
\tilde n \nabla \tilde c \nabla \varphi \,dxdt , \]
\[  \lim_{k\to \infty} \int_0^{\infty} \int_{\bbr^2} \om_k u_k \na \varphi dxdt =
   \int_0^{\infty} \int_{\bbr^2} \tilde \om \tilde u \na \varphi  dxdt.
\]
  Let  ${\rm supp}\, \varphi  \in B_R\times [0, T]$. We define
\[
  F_k(t) = \int_{B_R} \chi(c_k) n_k \na c_k \na \varphi dx, \qquad
  F(t)=\int_{B_R}\chi(\tilde c)
\tilde n \nabla \tilde c \nabla \varphi.
\]
Due to strong convergence we have $F_k(t) \to F(t) $ for $ t>0$.
Using the decay estimate \eqref{CKL110-jan14}, it holds that $F_k(t)
\le C(R) t^{-\frac 12}$, and we then have $\displaystyle \lim_{k\to
0} F_k(t) = F(t)$ via the dominated convergence theorem. For the
second example we also have
\[
\int_{B(R)} \om_k u_k \na \varphi dx \le
\| \om_k\|_{L^{\frac{4}{3}}}\|u_k\|_{L^4} \| \na
\varphi\|_{L^{\infty}} \le C(R) t^{-\frac 12},
\]
where we used the embedding $\|u_k\|_{L^4}\le C\| \om_k\|_{L^{\frac
43}}$ and the estimate \eqref{CKL120-jan14}. In fact, it holds that
\begin{equation}\label{CKL-Feb19-20}
\tilde c =0, \qquad \na \tilde \phi = 0.
\end{equation}
Indeed, from the $c_k$
equation we have
\[ \|c_k(t)\|_{L^p}  \le \|c_{k, 0}\|_{L^p} = k^{-\frac 2p}\|c_0\|_{L^p},\qquad 1\le p \le \infty.\]
It implies $\tilde c = 0$. Next we show that $\tilde \phi$ is a function of homogeneity zero.
 If $l >0$ is fixed and  $\psi \in C_0^{\infty}(\bbr^2)$, we have
 \[ \lim_{k\to \infty}\int _{\bbr^2}\phi_k(lx) \psi(x) dx = \lim_{k\to \infty} \int _{\bbr^2}\phi(kl x) \psi(x) dx
 = \int _{\bbr^2} \tilde \phi (x) \psi(x) dx.\]
On the other hand, denoting $\psi_l(y) := \psi(l^{-1}y)$, we see
that
 \begin{align*}
 \lim_{k\to \infty}\int _{\bbr^2}\phi_k(lx) \psi(x) dx & =
 \lim_{k\to \infty} \int_{\bbr^2} l^{-2}\phi_k(y) \psi(l^{-1} y) dy \\
 & = l^{-2} \int_{\bbr^2} \tilde\phi (y) \psi_l(y) dy = \int _{\bbr^2} \tilde \phi(lx) \psi(x) dx.
 \end{align*}
Therefore, $\na \tilde  \phi $ is a function of homogeneity  1,
namely $\nabla \phi(x)=l \nabla\phi (l x)$, which implies $\na
\tilde\phi =0$, since $\na \tilde \phi \in L^2(\bbr^2)$. On account
of \eqref{CKL-Feb19-20}, the system \eqref{tilde}-
\eqref{CKL230-jan14} is reduced to
\begin{align*}
\begin{cases}\partial_t \tilde{n} + \tilde{u} \cdot \nabla  \tilde{n} - \Delta \tilde{n}
= 0,\\
\partial_t \tilde{\omega} + \tilde{u}\cdot \nabla \tilde{\omega}
-\Delta \tilde{\omega} =0
\end{cases}\end{align*}
with initial data
\[ \tilde n_0 = m\del_0, \quad \tilde \om_0 = \ga\del_0.\]
  It is well established that
the vorticity equation of Navier-Stokes equation with the dirac-delta initial data
has the unique solution
\[
\tilde{w}(x,t)=\gamma
\Gamma(x,t).
\] We refer to \cite{GW} and \cite{GGS}, and references cited therein.
In
particular
\begin{equation*}
\tilde{u}=K*\tilde{\omega},\qquad K(x)=\nabla^{\perp} \log
|x|=<-\frac{x_2}{\abs{x}^2}, \frac{x_1}{\abs{x}^2}>,
\end{equation*}
which implies
$ \tilde u \cdot \na \tilde n=0$
by Lemma \ref{CKL-GGS-100} .  Then $\tilde n$ equation is reduced to
\[
\partial_t \tilde{n}  - \Delta \tilde{n}
=  0\]
with initial data
$ \tilde{n}_0=m\delta_0$.
As a direct application of Theorem 4.4.2 in \cite{GGS}, the above equation has
the unique solution
\[  \tilde n (x,t)= m \Gamma (x,t).\]

The asymptotics are obtained as follows. When $t=1$,
tending to zero as $k_j\rightarrow\infty$, we have
\begin{equation}\label{CKL300-Dec4}
\lim_{k_j\rightarrow
\infty}\norm{n_{k_j}(\cdot,1)-\tilde{n}(\cdot,1)}_{L^{\infty}(B_R)}=0.
\end{equation}
Using $\tilde{n}=m\Gamma$ is self-similar, we observe that
\[
n_{k_j}(x,1)-\tilde{n}(x,1)=k^2_j n(k_j x, k^2_j)-k^2_j\tilde{n}(k_j
x, k^2_j).
\]
Setting $t=k^2_j$, \eqref{CKL300-Dec4} can be rewritten as
\begin{equation}\label{CKL310-Dec4}
t\norm{\bke{n(\cdot,t)-\tilde{n}(\cdot,t)}}_{L^{\infty}(B_{t,R})}\,\,\longrightarrow\,\,
0\qquad \mbox{ as }\,t\rightarrow\infty,
\end{equation}
where $B_{t,R}=\{x: \abs{x}<\sqrt{t}R\}$. Similarly, for any
$r<\infty$ we obtain
\begin{equation}\label{CKL320-Dec4}
t^{1-\frac{1}{r}}\norm{\bke{\omega(\cdot,t)-\tilde{\omega}(\cdot,t)}}_{L^{r}(B_{t,R})}
\,\,\longrightarrow\,\, 0\qquad \mbox{ as }\,t\rightarrow\infty.
\end{equation}
Since we also have a convergence of $\nabla c$ to
$\nabla\tilde{c}=0$, we can see that
\begin{equation}\label{CKL200-Dec13}
t^{\frac{1}{2}}\norm{\nabla
c(\cdot,t)}_{L^{\infty}(B_{t,R})}\,\,\longrightarrow\,\, 0\qquad
\mbox{ as }\,t\rightarrow\infty.
\end{equation}
Here the point is that the decay estimates are independent of $k$.
Since $\tilde{n}(x,t)=m\Gamma(x,t)$ and
$\tilde{\omega}(x,t)=\gamma\Gamma(x,t)$, we complete the proof.
\end{pfthm1}

\section*{Acknowledgements}
M. Chae's work was partially supported by NRF-2011-0028951. K.
Kang's work was partially supported by NRF-2012R1A1A2001373.  J.
Lee's work was partially supported by NRF-2011-0006697.

\begin{equation*}
\left.
\begin{array}{cc}
{\mbox{Myeongju Chae}}\qquad&\qquad {\mbox{Kyungkeun Kang}}\\
{\mbox{Department of Applied Mathematics }}\qquad&\qquad
 {\mbox{Department of Mathematics}} \\
{\mbox{Hankyong National University
}}\qquad&\qquad{\mbox{Yonsei University}}\\
{\mbox{Ansung, Republic of Korea}}\qquad&\qquad{\mbox{Seoul, Republic of Korea}}\\
{\mbox{mchae@hknu.ac.kr }}\qquad&\qquad {\mbox{kkang@yonsei.ac.kr }}
\end{array}\right.
\end{equation*}
\begin{equation*}
\left.
\begin{array}{c}
{\mbox{Jihoon Lee}}\\
{\mbox{Department of Mathematics }}\\
{\mbox{Chung-Ang University}}\\
{\mbox{Seoul, Republic of Korea}}\\
{\mbox{jhleepde@cau.ac.kr }}
\end{array}\right.
\end{equation*}

\end{document}